\begin{document}

\newtheorem{thm}{Theorem}[section]
\newtheorem{lemma}[thm]{Lemma}
\newtheorem{defin}[thm]{Definition}
\newtheorem{rmk}[thm]{Remark}
\newtheorem{conj}[thm]{Conjecture}
\newtheorem{cor}[thm]{Corollary}
\newtheorem{mr}[thm]{Main Result}
\newtheorem{ass}[thm]{Assumption}
\newtheorem{prop}[thm]{Proposition}
\newtheorem{qu}[thm]{Question}

\makeatletter

\newcommand{\explain}[2]{\underset{\mathclap{\overset{\uparrow}{#2}}}{#1}}
\newcommand{\explainup}[2]{\overset{\mathclap{\underset{\downarrow}{#2}}}{#1}}

\makeatother

\newcommand{\map}{\mbox{$\rightarrow$}}
\newcommand{\bbb}{\mbox{$\beta$}}
\newcommand{\la}{\mbox{$\lambda$}}
\newcommand{\aaa}{\mbox{$\alpha$}}
\newcommand{\eee}{\mbox{$\epsilon$}}
\newcommand{\Rrr}{\mbox{$\mathbb{R}$}}
\newcommand{\lpd}{\mbox{$L^{\mathcal{V}(P,D^*)}$}}
\newcommand{\fpd}{\mbox{$\mathcal{V}(P,D^*)$}}
\newcommand{\bdd}{\mbox{$\partial$}}

\newcommand{\Li}{\mbox{$L_+^{in}$}}
\newcommand{\Lo}{\mbox{$L_+^{out}$}}

\title{Companions of the unknot and width additivity}
\author{Ryan Blair}
\author{Maggy Tomova}
\thanks{Research partially supported by an NSF grant.}
\begin{abstract}

It has been conjectured that for knots $K$ and $K'$ in $S^3$, $w(K\#K')= w(K)+w(K')-2$. In \cite{ST}, Scharlemann and Thompson proposed potential counterexamples to this conjecture. For every $n$, they proposed a family of knots $\{K^n_i\}$ for which they conjectured that $w(B^n\#K^n_i)=w(K^n_i)$ where $B^n$ is a bridge number $n$ knot. We show that for $n>2$ none of the knots in $\{K^n_i\}$ produces such counterexamples. \end{abstract}
\maketitle

\section{Introduction and definitions}

The width of a knot is an invariant first defined by Gabai \cite{Ga} in his proof of property $R$. The width of a projection of a knot in $S^3$ is an even number which depends on the number of critical points as well as on their relative heights. The width of the knot is the minimum width over all projections.

In this paper we often need to make a distinction between the width of a knot and the width of a particular diagram of the knot. We will use standard font to denote a projection of a knot and script font to refer to the family of all projections of the knot. In particular, $w(\mathcal{K})\leq w(K)$ for any $K$ a projection of $\mathcal{K}$. If a projection of the knot achieves the width of the knot, we say that the knot is in thin position. Amongst the applications of thin position have been Gordon and Luecke's proof of the knot complement conjecture \cite{GL}, Rieck and Sedgwick's study of the behavior of Heegaard surfaces under Dehn surgery \cite{RS2}, and Scharlemann and Thompson's proof of Waldhausen's Theorem \cite{ST2}. However, computing the width of a particular knot remains almost always impossible.

One of the questions regarding width that has attracted much interest is its behavior under connect sum. It is not difficult to see that $$w(\mathcal{K}\#\mathcal{K}')\leq w(\mathcal{K})+w(\mathcal{K}')-2.$$ Whether $w(\mathcal{K}\#\mathcal{K}')= w(\mathcal{K})+w(\mathcal{K}')-2$, however, remains an open question. There are partial results and special cases that point to equality. Most notably, Scharlemann and Schultens \cite{SchSch} showed that $w(\mathcal{K}\#\mathcal{K}')\geq max\{w(\mathcal{K}),w(\mathcal{K}')\}$ and Rieck and Sedgwick \cite{RS} showed that the equality $w(\mathcal{K}\#\mathcal{K}')= w(\mathcal{K})+w(\mathcal{K}')-2$ holds for small knots. Because of its similarity to bridge number, the just-mentioned partial results and the failed search for a counterexample, it was believed that for any two knots $\mathcal{K}$ and $\mathcal{K}'$, $w(\mathcal{K}\#\mathcal{K}')= w(\mathcal{K})+w(\mathcal{K}')-2$.

\begin{figure}
\begin{center} \includegraphics[scale=.4]{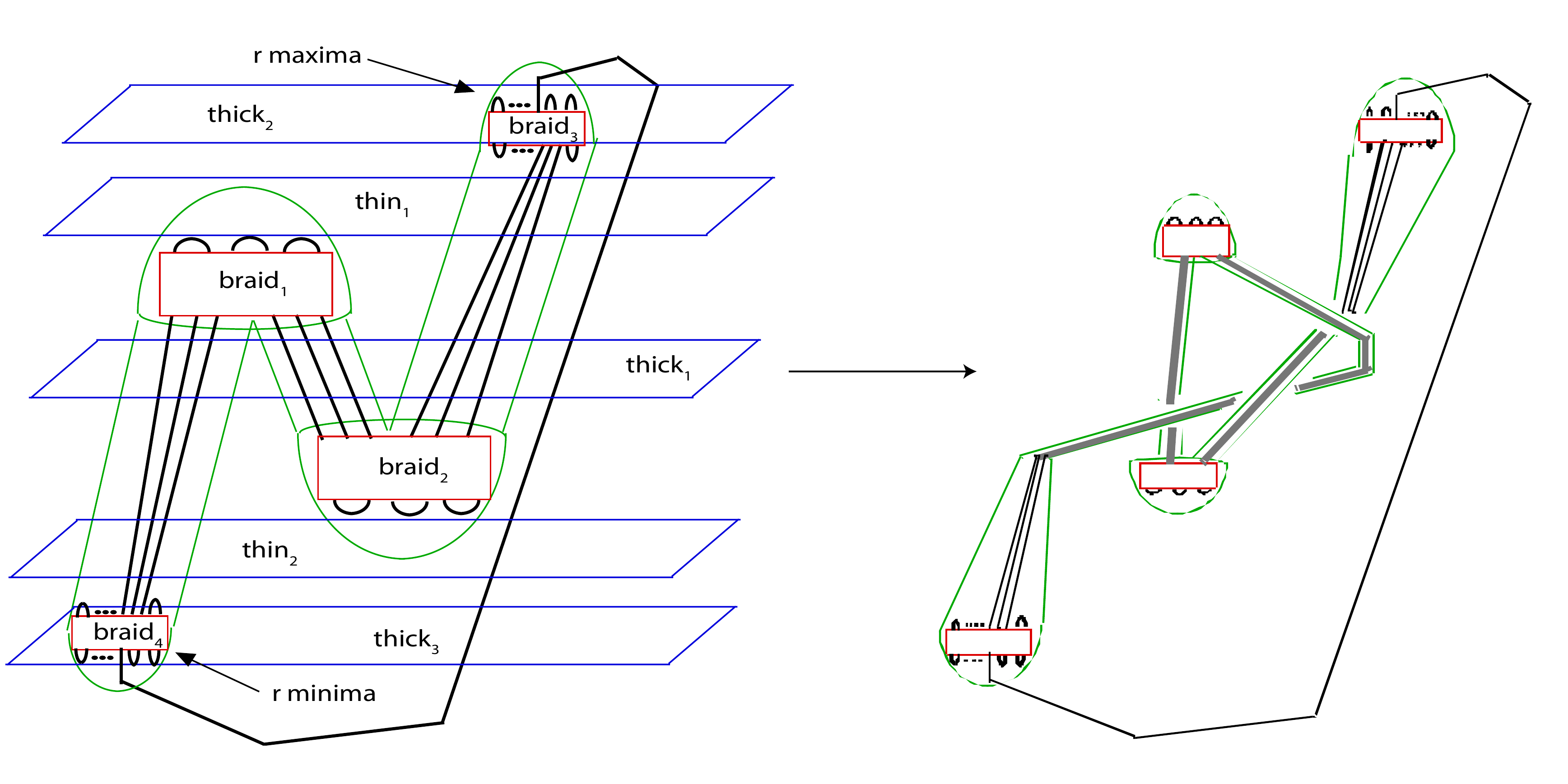}
\end{center}
\caption{} \label{fig:counter} \end{figure}

In a surprising paper \cite{ST}, Scharlemann and Thompson proposed examples for which they conjectured that the equality $w(\mathcal{K}\#\mathcal{K}')= max\{w(\mathcal{\mathcal{K}}),w(\mathcal{K}')\}$ holds. For each $n$, they gave an infinite family of knots that have a projection of the form given in Figure \ref{fig:Kn}.

\begin{conj} \cite{ST}
For each $n$, there exists at least one knot $\mathcal{K}^n$ with a projection $K^n$ as in Figure \ref{fig:Kn} so that $w(K^n)=w(\mathcal{K}^n)$.

\end{conj}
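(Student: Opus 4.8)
\section*{Proof proposal}

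The claimed equality splits into two inequalities, one trivial and one hard. The bound $w(\mathcal{K}^n)\le w(K^n)$ is immediate, since $K^n$ is an honest projection of $\mathcal{K}^n$ and one simply counts its intersections with regular level spheres in Figure \ref{fig:Kn}. All of the content lies in the reverse inequality $w(\mathcal{K}^n)\ge w(K^n)$, which asserts that the displayed projection is already thin. The subtlety here is that the Scharlemann--Schultens bound only supplies $w(\mathcal{K}^n)\ge\max\{w(B^n),w(\mathcal{K}^n_i)\}$, whereas for these examples the width of the displayed projection sits strictly between this maximum and the additive value $w(B^n)+w(\mathcal{K}^n_i)-2$. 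Thus the easy lower bound falls short of $w(K^n)$, and a genuinely sharper estimate is required; this is also why the small-knot result of Rieck--Sedgwick gives no help, since a satellite of this type is not a small knot.

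To obtain the sharper lower bound I would put $\mathcal{K}^n$ in thin position and feed its thin level spheres into the standard thin-position machinery: a thinnest level sphere gives, after compression, an essential meridional planar surface in the exterior of the knot. Because $\mathcal{K}^n=B^n\#K^n_i$ carries both the swallow--follow torus of the connect sum and the essential tori of its satellite structure, the plan is to isotope the thin spheres into efficient position against these tori and then argue, by a cut-and-paste count on the resulting curves of intersection, that each level sphere must meet $\mathcal{K}^n$ in at least as many points as the projection of Figure \ref{fig:Kn} forces. Combining the per-level estimates would then bound the total width below by $w(K^n)$.

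The main obstacle is precisely this last step: passing from the mere existence of an essential torus to a width lower bound that actually matches the displayed projection. A clever thin position could recombine the two summands, absorbing the critical points contributed by the high-bridge factor $B^n$ into width already spent by $K^n_i$, and ruling out such an unexpected economy is the delicate heart of the argument. Moreover I would expect this control to weaken as $n$ grows, since a larger bridge number gives more freedom to reorganize critical points and thin the projection; so the honest outcome of the analysis may well be that the lower bound \emph{cannot} be pushed up to $w(K^n)$ for large $n$, in which case the displayed projection is not thin and the conjectured equality fails. Pinning down the exact range of $n$ for which the satellite structure genuinely obstructs thinning is the crux on which the conjecture stands or falls.
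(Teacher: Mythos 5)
You have attempted to prove a statement that this paper does not prove and, for $n>2$, actively refutes. The statement is a conjecture quoted from \cite{ST}; the body of the paper (Lemmas \ref{lem:s1leqs3} and \ref{lem:s1geqs3}, Theorems \ref{thm:s1leqs3} and \ref{thm:siis1}, and the Main Result) shows that for every $n>2$ \emph{every} projection of the form in Figure \ref{fig:Kn} fails to be thin: it can be thinned by at least $2n^2-2$, which is at least $w(\mathcal{B}^n)-2$. So no knot of type $n$ with $n>2$ satisfies $w(K^n)=w(\mathcal{K}^n)$, and the conjecture survives, as an open question, only for $n=2$. Your closing hedge --- that ``the honest outcome of the analysis may well be that the lower bound cannot be pushed up to $w(K^n)$ for large $n$'' --- is therefore the correct conclusion, but as written your proposal proves nothing: it neither establishes the claimed equality nor exhibits a thinning isotopy, and an argument whose final paragraph concedes that the statement may be false is not a proof of the statement.

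The machinery you chose also points in the wrong direction. Essential meridional planar surfaces, swallow--follow tori, and Scharlemann--Schultens-type arguments are tools for \emph{lower} bounds on $w(\mathcal{K}^n)$, i.e., for confirming that a given projection is thin; they are of little use for detecting that a projection is \emph{not} thin, which is what is actually true here. What settles the question is entirely elementary and explicit: one writes down concrete isotopies (Figures \ref{fig:Kn-1} and \ref{fig:Kn3}) converting a type-$n$ projection into a type-$(n-1)$ or type-$(n-2)$ projection of the same knot, and computes the resulting change in width using Lemma \ref{lem:thickandthin} (width from the squares of thick and thin sphere widths), Lemma \ref{lem:addingtangle} (width change under insertion of a tangle), and Remark \ref{rmk:iso} (exchanging a maximum and a minimum changes width by $\pm 4$). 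A case split on $s_1\le s_3$ versus $s_1\ge s_3$, a separate argument when some $s_i=1$ (where the projection is visibly composite), and induction on $n$ then give $w(K^n)-w(\mathcal{K}^n)\ge 2n^2-2$ for all $n>2$. If you want to salvage your write-up, replace the essential-surface program with this kind of direct isotopy-plus-bookkeeping argument; the lower-bound technology you invoke cannot, even in principle, deliver the negative answer that the paper establishes.
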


\begin{cor}
For each knot $\mathcal{K}^n$ as in the conjecture, $w(\mathcal{B}^n\#\mathcal{K}^n)=w(\mathcal{K}^n)$ where $\mathcal{B}^n$ is a bridge number $n$ knot for which thin and bridge position coincide.
\end{cor}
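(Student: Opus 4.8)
The plan is to pin $w(\mathcal B^n\#\mathcal K^n)$ between two bounds and use the conjecture to collapse them. By hypothesis $\mathcal K^n$ satisfies the conjecture, so the diagram $K^n$ of Figure~\ref{fig:Kn} is thin, i.e.\ $w(K^n)=w(\mathcal K^n)$. It therefore suffices to prove
\[
w(\mathcal K^n)\ \le\ w(\mathcal B^n\#\mathcal K^n)\ \le\ w(K^n),
\]
since the two outer quantities are equal. The lower bound is immediate from the result of Scharlemann and Schultens \cite{SchSch} quoted above, namely $w(\mathcal B^n\#\mathcal K^n)\ge \max\{w(\mathcal B^n),w(\mathcal K^n)\}\ge w(\mathcal K^n)$; no information about the particular family $\{K^n_i\}$ is needed here.

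The content is the upper bound, which I would establish by exhibiting a single diagram of $B^n\#K^n$ whose width is exactly $w(K^n)$; because the width of a knot is the minimum over all diagrams, this gives $w(\mathcal B^n\#\mathcal K^n)\le w(K^n)$. Since $\mathcal B^n$ has bridge number $n$ with thin and bridge position coinciding, I would present $\mathcal B^n$ in its $n$-bridge position, i.e.\ a ball $\Sigma$ meeting $B^n$ in a tangle with $n$ minima lying below $n$ maxima. The summing is then performed along the distinguished strand of $K^n$ singled out in Figure~\ref{fig:Kn}: I would insert $\Sigma$ so that the $2n$ critical points of $B^n$ are slid into the height window of the widest band of $K^n$ and interleaved with the critical points already present there, with the $n$-bridge sphere of $\mathcal B^n$ amalgamated with a thin level of $K^n$ (much as one bridge is "shared" in Schubert's formula $b(\mathcal K\#\mathcal K')=b(\mathcal K)+b(\mathcal K')-1$). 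The verification is then a bookkeeping of intersections with regular level spheres: one must check that, with this placement, the strands introduced by the $B^n$-summand run alongside strands already counted in the widest band, so that summing the intersection numbers over all regular levels still returns $w(K^n)$.

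The main obstacle is exactly this level-count, since a priori connect-summing can only add critical points and hence increase the width sum; the equality $w(B^n\#K^n)=w(K^n)$ depends on the precise geometry of Figure~\ref{fig:Kn} allowing the inserted tangle to be absorbed into the widest band without raising any of the relevant counts. This is where the hypotheses are used in full force: that $B^n$ has bridge number exactly $n$ and is thin in bridge position, so that the inserted tangle has precisely $n$ maxima and $n$ minima matching the band, and that $K^n$ is the specific diagram of Figure~\ref{fig:Kn}. I expect the delicate point to be confirming that no level sphere passing through the band is forced above the width it already carries in $K^n$; granting that, combining the two displayed inequalities with $w(K^n)=w(\mathcal K^n)$ yields $w(\mathcal B^n\#\mathcal K^n)=w(\mathcal K^n)$.
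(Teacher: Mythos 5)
Your scaffolding matches the intended argument: the lower bound $w(\mathcal{B}^n\#\mathcal{K}^n)\geq w(\mathcal{K}^n)$ from \cite{SchSch}, an upper bound obtained by exhibiting one explicit diagram of $B^n\#K^n$ whose width is $w(K^n)$, and the conjecture to identify $w(K^n)$ with $w(\mathcal{K}^n)$. The lower bound and the use of the conjecture are fine. The gap is in the upper bound, which is the entire content of the corollary, and the construction you describe cannot produce it. If you insert a ball $\Sigma$ containing $B^n$ in bridge position locally into one strand of $K^n$, the tangle strands are \emph{new} strands: every regular level sphere whose height lies in the window of $\Sigma$ gains extra intersection points (roughly $2n$ of them at the widest level of the tangle), and each new regular level created inside $\Sigma$ also counts all the old strands of $K^n$ at that height. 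No interleaving of critical points can cancel this; width is insensitive to the relative order of two maxima, and ``running alongside'' existing strands does not exempt new strands from being counted, since parallel strands each contribute to $|K\cap\pi^{-1}(t)|$. Any such local insertion increases the width by at least $w(\mathcal{B}^n)-2=2n^2-2$ (this is exactly how the general bound $w(\mathcal{K}\#\mathcal{K}')\leq w(\mathcal{K})+w(\mathcal{K}')-2$ arises), and inserting into the \emph{widest} band, as you propose, makes the increase larger, not smaller, because the new level spheres inside $\Sigma$ then count all the strands of that band.

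The missing idea---which is what Figure \ref{fig:counter} depicts for $n=2$ and what \cite{ST} carries out in general---is that the connect sum must be realized globally rather than inside a ball. The diagram $K^n$ of Figure \ref{fig:Kn} exhibits $\mathcal{K}^n$ as a wrapping number one satellite of the unknot $U$ in $n$-bridge position: $K^n$ lies in a solid torus $V$ around $U$, and some meridian disk of $V$ meets $K^n$ in a single point (the setting of Figure \ref{fig:wrappingtorus}). Re-embed $V$ so that its core becomes $B^n$ in its thin ($=$ $n$-bridge) position. Because a meridian disk meets the pattern once, the resulting knot is precisely $\mathcal{B}^n\#\mathcal{K}^n$; because $B^n$ in bridge position has the same vertical structure as $U$ in $n$-bridge position ($n$ maxima above $n$ minima), the re-embedded diagram has the same critical heights and the same intersection number with every level sphere as $K^n$ itself. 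No strands and no critical points are added: the summand's knotting is absorbed by re-routing the strands $K^n$ already has, through its existing braid boxes. Hence $w(B^n\#K^n)=w(K^n)$, and with this replacement for your upper bound the sandwich argument closes as you intended.
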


If we assume the conjecture, Figure \ref{fig:counter} gives a proof of the corollary where $n=2$ and $\mathcal{B}^2$ is the trefoil. For more details see \cite{ST}. Note that even if $K^n$ is not in thin position but satisfies the weaker condition that  $w(K^n)<w(\mathcal{K}^n)+w(\mathcal{B}^n)-2$, it will still give a counterexample to $w(\mathcal{K}\#\mathcal{K}')= w(\mathcal{K})+w(\mathcal{K}')-2$, although the corollary will no longer follow.

In this paper, we show that for $n>2$ all knots in the families used in \cite{ST} and depicted in Figure \ref{fig:Kn} satisfy $w(K^n)\geq w(\mathcal{K}^n)+w(\mathcal{B}^n)-2$. It follows that the projection $K^n\#B^n$ for which $w(K^n\#B^n)=w(K^n)$ discovered by Scharlemann and Thompson satisfies the inequality  $w(K^n\#B^n)\geq w(\mathcal{K}^n)+w(\mathcal{B}^n)-2$ and cannot be used to provide an interesting upper bound for $w(\mathcal{K}^n\#\mathcal{B}^n)$. Invalidating these likely counterexamples is additional evidence supporting the conjecture that in fact width is additive under connect sums.

In the final section, we investigate a larger family of knots that includes those in \cite{ST}. In particular, we look at projections of wrapping number one companions of the unknot $U$, where $U$ is in bridge position with $n$ bridges. See Figure \ref{fig:wrappingtorus}. Any such projection in thin position with $n>1$ would provide a counterexample to the additivity of width. We show that the techniques developed in the paper can be easily adapted to prove that many of these more general projections can be thinned.

\section{Preliminaries}

Recall that we will use standard font to denote a projection of a knot and script font to refer to the family of all projections of the knot. Suppose $K$ is a projection of $\mathcal{K}$ that is in general position with respect to $\pi$, the standard height function on $S^3$. If $t$ is a regular
value of $\pi|_K$, $\pi^{-1}(t)$ is called a level sphere with width $w(\pi^{-1}(t))=|K\cap \pi^{-1}(t)|$. If $c_{0}<c_{1}<...<c_{n}$ are all the critical values of $\pi|_K$, choose regular values
$r_{1},r_{2},...,r_{n}$ such that $c_{i-1}<r_{i}<c_{i}$. Then the {\em width of $K$ } is defined by $w(K)=\sum w(\pi^{-1}(r_{i}))$. The {\em width} of $\mathcal{K}$, $w(\mathcal{K})$, is the minimum of $w(K)$ over all $K\in \mathcal{K}$. We say that $K$ is in thin position if $w(\mathcal{K})=w(K)$.  More details about thin position and basic results can be found in \cite{Schar1}.

A level sphere that corresponds to a local minimum in the ordered sequence of integers $\{ w(\pi^{-1}(r_{1}), w(\pi^{-1}(r_{2})),..., w(\pi^{-1}(r_{n}))\}$ is called {\em thin}
and a level sphere that corresponds to a local maximum is called {\em thick}. We will use the following result found in \cite{SchSch} to simplify our computations.

\begin{lemma}\label{lem:thickandthin}
If $\{a_i\}$, $i=0,...n$ and $\{b_j\}$, $j=0,...n+1$ are the widths of all thin and thick spheres for $K$ respectively, then $w(K)=(\Sigma b_j^2-\Sigma a_i^2)/2$.
\end{lemma}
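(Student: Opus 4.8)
The plan is to work directly with the sequence of sampled widths and extract the formula from a single telescoping identity. Write $W_i = w(\pi^{-1}(r_i))$ for $i=1,\dots,n$, and augment this list with boundary terms $W_0 = W_{n+1} = 0$, corresponding to level spheres lying entirely below $c_0$ or above $c_n$. Since passing a single critical value of $\pi|_K$ changes the number of intersection points with a level sphere by exactly two, consecutive terms of $W_0, W_1, \dots, W_{n+1}$ differ by exactly $2$. By definition a thick sphere is a local maximum and a thin sphere a local minimum of this sequence, so in the ordered list the thick values $b_j$ and thin values $a_i$ alternate, beginning and ending with a thick value; the two boundary terms $W_0, W_{n+1}$ are the extreme minima of the augmented sequence but are not counted among the $a_i$.

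First I would record the elementary identity $\sum_{i=1}^{n+1}(W_{i-1}+W_i) = 2\,w(K)$: each interior term $W_i$ with $1\le i\le n$ occurs in exactly two summands while the two boundary terms vanish, so the left-hand side equals $2\sum_{i=1}^{n}W_i = 2\,w(K)$. Next I would rewrite each summand using $|W_i - W_{i-1}| = 2$, which gives $W_{i-1}+W_i = \tfrac12\,|W_i^2 - W_{i-1}^2|$, and hence $\sum_{i=1}^{n+1}(W_{i-1}+W_i) = \tfrac12\sum_{i=1}^{n+1}|W_i^2 - W_{i-1}^2|$.

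The crux is then to evaluate $\sum_{i=1}^{n+1}|W_i^2 - W_{i-1}^2|$ by grouping the steps into maximal monotonic runs of the sequence. Along one such run, from a local minimum of value $u$ to the adjacent local maximum of value $v$ (or vice versa), the absolute squared differences telescope to $v^2 - u^2$. Summing over all runs, every thick value $b_j$ arises as the upper endpoint of exactly two runs — the run ascending to it and the run descending from it — and so contributes $+2\,b_j^2$, while every thin value $a_i$ arises as the lower endpoint of exactly two runs and contributes $-2\,a_i^2$; the boundary minima, of value $0$, contribute nothing. Thus $\sum_{i=1}^{n+1}|W_i^2 - W_{i-1}^2| = 2\sum_j b_j^2 - 2\sum_i a_i^2$, and combining the two evaluations yields $2\,w(K) = \sum_j b_j^2 - \sum_i a_i^2$, which is the claim.

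The only delicate point, and the step I would take most care with, is the run-by-run bookkeeping: one must verify that the thick and thin spheres genuinely alternate and that each interior extremum is shared as a common endpoint by precisely the two monotonic runs meeting there, while the two boundary terms of value $0$ act as the outermost minima and therefore never contribute. This alternation is exactly what the definitions of thick and thin supply, and it is what produces the factor of two that makes the final division by $2$ yield an integer.
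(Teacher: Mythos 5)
Your proof is correct, but note that the paper itself gives no proof of this lemma: it is quoted as a known result from \cite{SchSch}, so there is no internal argument to compare yours against, only the standard one in that reference. Your derivation is self-contained and all the key steps hold: augmenting with $W_0=W_{n+1}=0$ is legitimate (a level sphere below or above all of $K$ misses $K$, and exactly one nondegenerate critical point of $\pi|_K$ separates consecutive sampled levels, so consecutive terms differ by exactly $2$); the identity $\sum_{i=1}^{n+1}(W_{i-1}+W_i)=2w(K)$ holds because the two boundary terms vanish; the rewriting $W_{i-1}+W_i=\tfrac12\,|W_i^2-W_{i-1}^2|$ is just the factorization $|W_i^2-W_{i-1}^2|=|W_i-W_{i-1}|\,(W_i+W_{i-1})$ with $|W_i-W_{i-1}|=2$; and the run-by-run telescoping is sound because the step-size-two condition forbids plateaus, so the augmented sequence decomposes into maximal monotone runs whose interior extrema are precisely the thick and thin values, each shared by exactly two runs, while the two zeros at the ends are floors of only one run each and contribute nothing. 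This bookkeeping also explains why the lemma counts one more thick sphere than thin sphere: a knot's width sequence begins and ends at $2$, so the outermost minima of the augmented sequence are the zeros rather than thin levels, exactly as you observe. Your route --- converting $w(K)$ into $\tfrac14\sum_{i}|W_i^2-W_{i-1}^2|$ and then telescoping --- is the same counting that underlies the proof in \cite{SchSch}, which sums widths over monotone stretches between adjacent thin and thick levels, but your algebraic packaging avoids summing arithmetic progressions explicitly and is arguably tidier.
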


It is often useful to decompose a knot into tangles. We will need the following special case of this approach. Suppose $P$ and $P'$ are two non-parallel level spheres for a knot $K$ with the same width and let $T \subset K$ be a tangle lying between them such that, between $P$ and $P'$, $K / T$ consists only of vertical strands, see Figure \ref{fig:KT}. We can extend the definition of width to also apply to a tangle $T \subset D^2 \times I$ in the following way. If $c_{0}<c_{1}<...<c_{n}$ are all the
critical values for $T$, choose regular values
$r_{1},r_{2},...,r_{n}$ such that $c_{i-1}<r_{i}<c_{i}$ for $i=1,...n$. Then $w(T)=\sum
w(\pi^{-1}(r_{i}))$. (Note that this definition does not add the widths of $D^2 \times \{0\}$ and  $D^2 \times \{1\}$ to the total number. This has the somewhat unpleasant consequence that the width of a tangle with a single critical point is 0. However, this definition is the most convenient for our purposes.) We can also associate a knot $K^-$ to $K/T$ by identifying $P$ and $P'$. This operation is not well defined, but the width of the resulting projection is. In this case, we can express the width of $K$ in terms of the widths of $T$ and $K/T$ where we define $w(K/ T)=w(K^-)$.

\begin{figure}
\begin{center} \includegraphics[scale=.4]{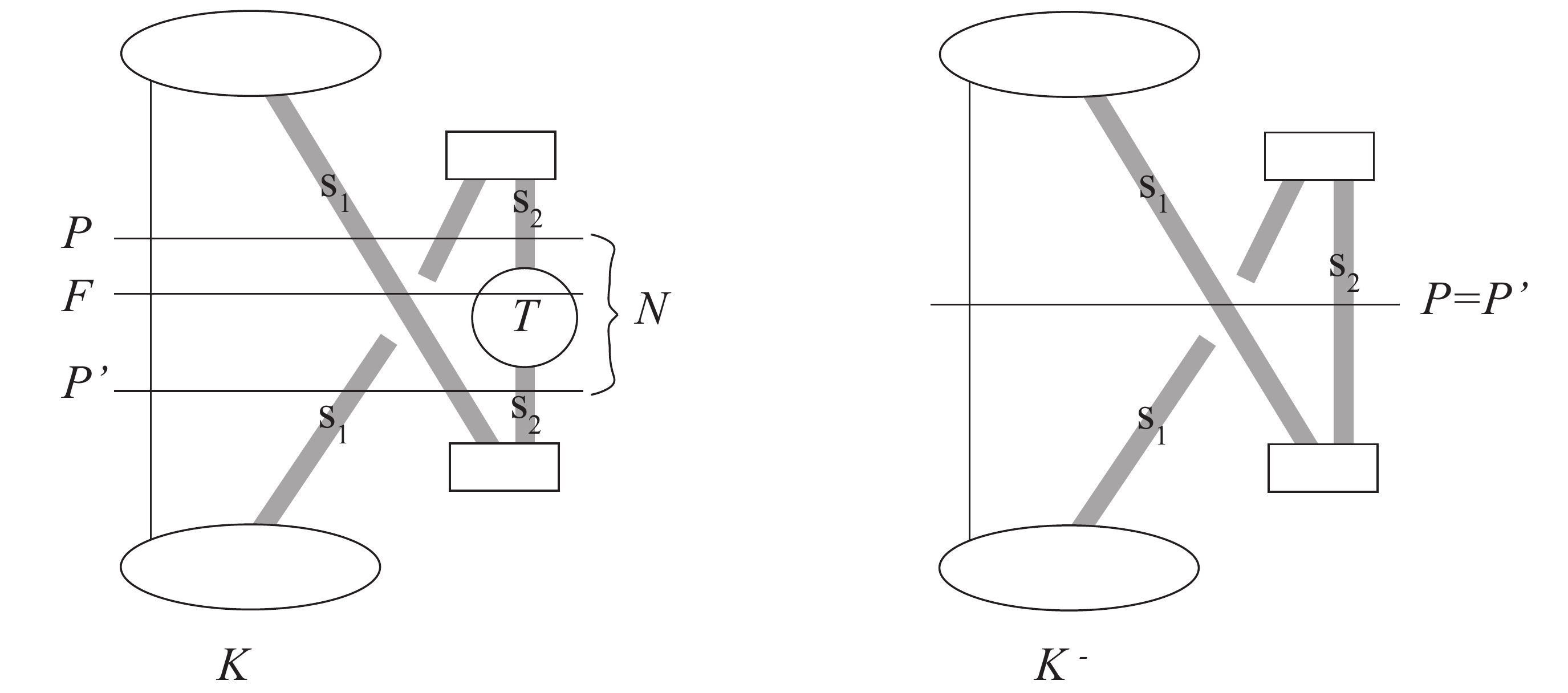}
\end{center}
\caption{} \label{fig:KT} \end{figure}

\begin{lemma}\label{lem:addingtangle}
$w(K)=w(K/T)+w(T)+l(r-1)+w(P)$ where $r$ is the number of critical points for $T$ and $l=|P \cap K| -|T\cap (D^2 \times \{0\})|$. \end{lemma}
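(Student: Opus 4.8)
The plan is to compute $w(K)$ directly as a sum of level-sphere widths and to compare this sum term by term with the corresponding sum for $K^-$, recalling that $w(K/T)=w(K^-)$ by definition. First I would put $K$ in general position and fix notation (see Figure \ref{fig:KT}): write $m=w(P)=w(P')=|P\cap K|$ for the common width, let $k=|T\cap(D^2\times\{0\})|=|T\cap(D^2\times\{1\})|$ be the number of strands of $T$ meeting each of $P$ and $P'$, and note that since the remaining strands between $P$ and $P'$ are vertical we have $l=m-k$, matching the definition of $l$. Because the vertical strands carry no critical points, the only critical values of $K$ between the heights of $P$ and $P'$ are the $r$ critical values $d_1<\cdots<d_r$ of $T$.

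Next I would order all critical values of $K$ and split them into three blocks: those below $P'$, the block $d_1<\cdots<d_r$, and those above $P$. The key structural observation is that $K$ and $K^-$ agree outside the region between $P$ and $P'$, so every gap between consecutive critical values lying strictly below $P'$ or strictly above $P$ contributes equally to $w(K)$ and to $w(K^-)$ and cancels in the difference $w(K)-w(K^-)$. It then remains only to account for the gaps meeting the middle region.

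For $K$, the middle region contributes in two ways. The two outermost gaps straddle $P'$ and $P$: one runs from the highest critical value below $P'$ up to $d_1$, the other from $d_r$ up to the lowest critical value above $P$; a regular level sphere in either gap meets $T$ in $k$ points and the vertical strands in $l$ points, so each such gap contributes $k+l=m=w(P)$. The $r-1$ interior gaps $(d_j,d_{j+1})$ each contribute $|T\cap\pi^{-1}(s_j)|+l$ for a regular value $s_j$, and summing gives $w(T)+l(r-1)$ by the definition of tangle width. By contrast, in $K^-$ the whole middle region has collapsed to a single regular gap of width $m$. Subtracting, the two straddling gaps against the one collapsed gap leave a net $+w(P)$, while the interior gaps leave $w(T)+l(r-1)$, yielding $w(K)=w(K^-)+w(T)+l(r-1)+w(P)=w(K/T)+w(T)+l(r-1)+w(P)$.

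The step I expect to be the main obstacle is the bookkeeping around the two straddling gaps. One must verify that their level spheres have width exactly $w(P)$, which is precisely where the hypotheses $w(P)=w(P')$ and ``$K/T$ vertical between $P$ and $P'$'' enter, and one must respect the convention that tangle width counts only the $r-1$ interior gaps. This is exactly what produces the factor $l(r-1)$ rather than $lr$ and the single extra summand $w(P)$ rather than $2w(P)$; once this accounting is pinned down the identity follows at once.
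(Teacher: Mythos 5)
Your proof is correct and takes essentially the same approach as the paper: both compare the width sum of $K$ with that of $K^-$, attribute $w(T)+l(r-1)$ to the $r-1$ level spheres lying between consecutive critical points of $T$, and account for one extra copy of $w(P)$ because both $P$ and $P'$ appear in $K$ while only one survives in $K^-$. Your gap-by-gap bookkeeping simply makes explicit what the paper phrases as inserting a copy $N$ of $S^2\times I$ containing $T$ and $l$ vertical arcs just below $P$.
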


\begin{proof}

To obtain $K$ from $K^-$ we can imagine inserting a copy $N$ of $S^2 \times I$ containing $T$ and $l$ vertical arcs just below $P$. $N$ contains all additional level spheres we need take into account to obtain $w(K)$ from $w(K^-)$. Let $F$ be a level sphere contained in $N$, see Figure \ref{fig:KT}. It has some punctures coming from $T$ and $l$ punctures coming from $K/T$. There are $r-1$ such spheres and the sum of their widths is then $w(T)+l(r-1)$. Also in $K$, there are two spheres, $P$ and $P'$, with equal width and only one of them is accounted for in $w(K^-)$. Thus, $w(K)=w(K/T)+w(T)+l(r-1)+w(P)$ as desired.

\end{proof}

The following lemma is often used but we give a proof here for completeness.

\begin{lemma}\label{lem:out}

Suppose a tangle $T\subset (D^2 \times I)$ is in bridge position (all maxima are above all minima) and $T \cap (D^2 \times \{1\})=\emptyset$. Then there is an isotopy $\phi$, such that $\phi(T)$ is in bridge position, $
w(\phi(T))=w(T)$ and the diagram for $\phi(T)$ includes a sub-strand of the tangle with one endpoint in $T \cap (D^2 \times \{0\})$ containing a single maximum and no intersections with itself or any other part of $T$. \end{lemma}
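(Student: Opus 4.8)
The plan is to produce $\phi$ as a \emph{level-preserving} (horizontal) ambient isotopy of $D^2\times I$, that is, one carrying each level disk $D^2\times\{t\}$ to itself. Such an isotopy does not alter $\pi|_T$ in any way: it fixes every critical point and its height, and it preserves $|\,T\cap\pi^{-1}(r)\,|$ for every regular value $r$. Consequently $\phi(T)$ is automatically in bridge position and $w(\phi(T))=w(T)$, so the entire content of the lemma is reduced to exhibiting a single sub-strand that this horizontal freedom can comb clear of the rest of the diagram.

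First I would record the structure forced by the hypotheses. Since $T\cap(D^2\times\{1\})=\emptyset$, every endpoint of $T$ lies on $D^2\times\{0\}$. Reading a strand upward from such an endpoint, the first critical value cannot be a minimum, hence it is a maximum; thus each bottom endpoint $p$ is joined by a \emph{monotone} arc $\alpha_p$ to its first maximum $M_p$. Fixing a bridge sphere $S=D^2\times\{s\}$ with all maxima above $S$ and all minima below it, each $\alpha_p$ is an embedded unknotted arc meeting $S$ in exactly one point, with one endpoint on $D^2\times\{0\}$ and the other at the maximum $M_p$.

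The heart of the argument is to select a $p$ whose ascending arc $\alpha_p$ can be combed by a horizontal isotopy onto (a slight push-off of) the boundary wall $\partial D^2\times I$ while every other strand is pushed strictly interior to it. Once this is arranged the projections separate: $\alpha_p$ projects to an embedded arc disjoint from the projection of $T\setminus\alpha_p$, and since $\alpha_p$ carries the single maximum $M_p$ and descends to $p\in D^2\times\{0\}$, it is precisely the required sub-strand. To locate such a $p$ I would run an outermost/innermost-disk argument: as one attempts to push $\alpha_p$ radially to the wall it sweeps out a half-disk, and the intersections of this disk with the rest of $T$ can be removed by passing to a component innermost on the disk; choosing the globally innermost such configuration should produce a monotone ascending arc whose combing disk meets $T$ only along the arc itself, so that sliding across it (a horizontal motion) deposits the arc on the wall, clear of everything.

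The step I expect to be the main obstacle is exactly this selection. A monotone ascending arc is always unknotted, but it can be \emph{clasped} by another strand: a strand in bridge position may dip to a minimum and rise again, hooking around $\alpha_p$, and a clasped arc cannot be combed free. Moreover, naive choices (for instance the arc realizing the greatest first-maximum height) are not automatically unlinked, since the hooking can occur well below the top. The real work is therefore to show that the innermost surgery can be carried out so that the surviving clean arc still runs all the way down to an endpoint on $D^2\times\{0\}$ and still carries a single maximum, rather than being truncated into an arc lying between two minima. A secondary, more routine point to verify is that the combing motion, although it may sweep the chosen arc past many others in the diagram, is genuinely supported by a horizontal ambient isotopy and hence leaves every level-wise intersection count—and therefore both $w(T)$ and the bridge structure—unchanged.
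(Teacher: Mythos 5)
There is a genuine gap, and it sits exactly where you say it does: the ``selection'' step is not an obstacle to be smoothed over later --- it is the entire content of the lemma, and your proposal does not carry it out. By restricting yourself to level-preserving ambient isotopies you make the width and bridge-position claims trivial, but you also throw away the one resource the hypothesis $T\cap(D^2\times\{1\})=\emptyset$ provides, namely the empty region $(\partial D^2\times I)\cup(D^2\times\{1\})$ above and around the tangle. Horizontal motions cannot exploit that room, so you are forced to (i) move the chosen arc $\alpha_p$ itself and (ii) find a $p$ for which this works; the clasping/hooking configurations you describe are exactly the reason no naive choice of $p$ works, your innermost-disk surgery has no mechanism guaranteeing that the innermost piece is an arc running down to $D^2\times\{0\}$ with a single maximum (you concede it may be an arc between two minima), and you offer no replacement argument. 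A secondary imprecision: combing $\alpha_p$ onto the wall with everything else ``strictly interior'' does not by itself separate projections, since the wall projects over the whole diagram; one needs lateral separation by a vertical plane parallel to the projection direction.

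The paper's proof inverts your strategy and thereby needs no selection at all. Fix $\sigma$ to be \emph{any} strand from a bottom endpoint to just past its first maximum. Whenever $\sigma$ passes \emph{over} another strand, take the highest such crossing, and move a small segment $u$ of the \emph{other} strand (the understrand): isotope a neighborhood $\gamma$ of $u$ out to $\partial(D^2\times I)$ and slide it across a disk $D'\subset(\partial D^2\times I)\cup(D^2\times\{1\})$ --- disjoint from $T$ by hypothesis --- to a monotone arc $\gamma'$ on the wall with the same endpoints. The isotopy is emphatically \emph{not} level-preserving (it passes through the free region above the tangle, which is what undoes any hooking), but the end result replaces one monotone segment by another monotone segment spanning the same heights, so every level-sphere count, hence the width and the bridge structure, is unchanged. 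Each such move strictly decreases the number of overcrossings of $\sigma$ (it may create crossings not involving $\sigma$, which is harmless), so after finitely many iterations $\sigma$ underpasses everything and can be pulled clear of the diagram. To repair your proposal you would have to either prove your selection claim --- which the hooking examples make at best delicate under purely horizontal isotopies --- or abandon the level-preserving constraint and argue, as the paper does, that width preservation only needs to hold for the composite isotopy's end state, not at every instant.
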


\begin{figure}
\begin{center} \includegraphics[scale=.5]{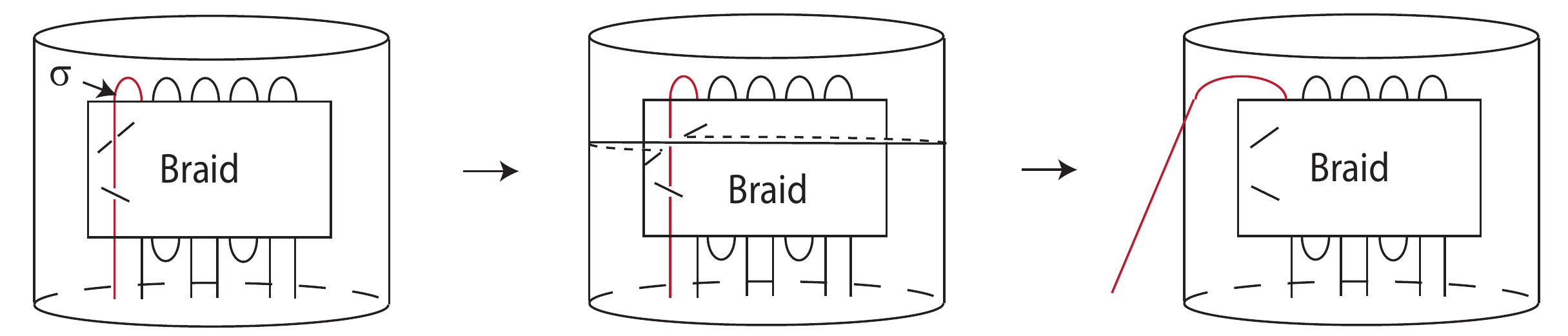}
\end{center}
\caption{} \label{fig:getout} \end{figure}

\begin{proof}
Let $\sigma$ be any sub-strand of $T$ with one endpoint of $\sigma \subset T \cap (D^2 \times \{0\})$ and the other endpoint lying just past the first critical point (necessarily a maximum), see Figure \ref{fig:getout}. Suppose there are some crossings where $\sigma$ is the overstrand. Consider the highest such crossing  and let $u$ be a small segment of $T$ containing the understrand at that crossing. Isotope a small neighborhood $\gamma$ of $u$ to lie in $\bdd (D^2 \times I)$. The arc $\gamma$ together with an arc $\gamma' \subset \bdd D^2 \times I$ cobound a circle with a single minimum and a single maximum coinciding with the endpoints of $\gamma$ which is the boundary of a subdisk $D'$ of $(\bdd D^2 \times I)\cup (D^2 \times \{1\})$. This disk gives an isotopy between $\gamma$ and $\gamma'$. The isotopy preserves width and decreases the number of overcrossings of $\sigma$, although, it may create numerous other crossings not involving $\sigma$. After finitely many iterations, we may assume that all crossings of $\sigma$ are undercrossings. In particular, we can isotope $\sigma$ to be disjoint from the rest of $T$.
\end{proof}

\section{Results}

In all figures in this paper, an oval represents any tangle and a rectangle represents a tangle in bridge position (i.e., a tangle for which all maxima are above all minima). We will call a tangle contained in $D^2 \times I$ that is in bridge position a {\em braid box}. We do not require that a braid box contain both minima and maxima.

\begin{defin} A knot is of {\em type $n$} if it has a projection as in Figure \ref{fig:Kn}. In particular, the following hold:

\begin{enumerate}
\item Each braid box $X_{i,1}$ is higher than and disjoint from braid box $X_{i+1,1}$.
\item Each braid box $X_{i,2}$ is lower than and disjoint from braid box $X_{i+1,2}$.

\item $X_{i,1}$ is higher than and disjoint from $X_{j,2}$ for every $i$ and $j$.

\item The number of strands descending out of $X_{i,1}$ to the right is equal to the number of strands ascending out of $X_{i,2}$ to the right.

\end{enumerate}
\end{defin}

It is clear that a knot can be of more than one type. In particular, if a knot is of type $n$, then it is also of type $m$ for all $m \leq n$. We will show that under certain conditions the projection given in Figure \ref{fig:Kn}  is not thin. To do this, we will describe two isotopies and for each one we will compare the width of the projection after the isotopy to the width of the original projection. The isotopies are depicted in Figures \ref{fig:Kn-1} and \ref{fig:Kn3}.

\begin{figure}
\begin{center} \includegraphics[scale=.3]{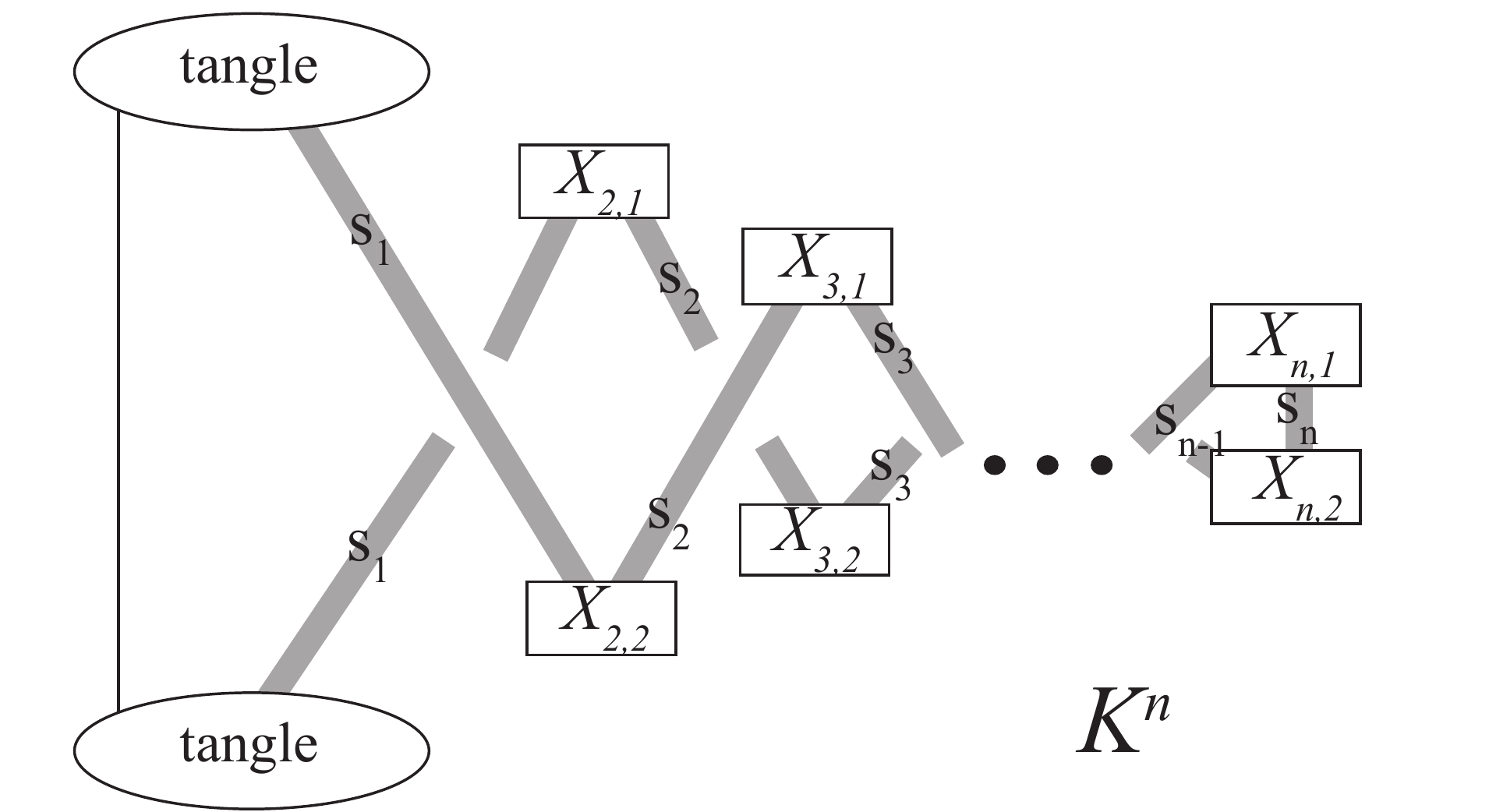}
\end{center}
\caption{} \label{fig:Kn} \end{figure}

Consider first the isotopy depicted in Figure \ref{fig:Kn-1} (note that the middle circle is a tangle containing braid boxes $\{X_{4,1},...,X_{n,2}\}$). For each $i$, $s_{i}$ (illustrated in grey) is the number of strands connecting braid box $X_{i,2}$ to braid box $X_{i+1,1}$ and braid box $X_{i,1}$ to braid box $X_{i+1,2}$. The letters $a$ and $b$ represent the maximum number of intersections between any level sphere and braid boxes $X_{2,1}$ and $X_{2,2}$ respectively. We will be using Lemma \ref{lem:thickandthin} to compute the widths of knots. Notice that the width is only changed in the regions affected by the isotopy. If each of $X_{2,1}$ and $X_{2,2}$ has both minima and maxima, then the thick spheres disjoint from $T$ and affected by the isotopy are exactly the two level spheres that intersect $X_{2,1}$ and $X_{2,2}$ in $a$ and $b$ points respectively and the thin spheres affected are the two level spheres directly above and below $T$. The first step of the isotopy relies on Lemma \ref{lem:out}. We will also use the following easy-to-verify inequalities.

\begin{rmk}\label{rmk:easy} Let $K^n$ be as in Figure \ref{fig:Kn-1}. Then
\begin{enumerate}
\item $a,b \geq s_1+s_2$,
\item if the number of critical points in $T$ is $r$, then $r\geq s_2+2s_3+2s_3+...+2s_{n-1}+s_n$.
\end{enumerate}
\end{rmk}

\begin{figure}
\begin{center} \includegraphics[scale=.3]{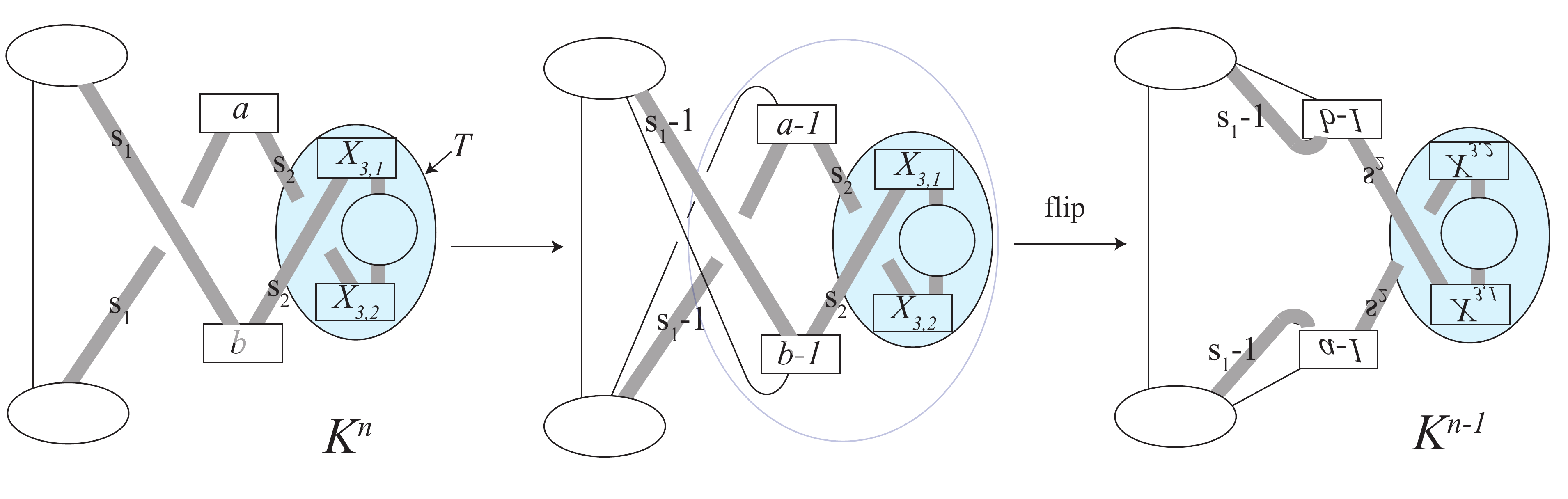}
\end{center}
\caption{} \label{fig:Kn-1} \end{figure}

\begin{lemma}\label{lem:s1leqs3}
Let $K^n$ and $K^{n-1}$ be the projections of knots of type $n$ and $n-1$ depicted in Figure \ref{fig:Kn-1}. Then $K^n$ and $K^{n-1}$ are projections of the same knot. Furthermore, if $n \geq 3$, $s_i \geq 3$ for all $i=1,..,n-1$ and $s_1 \leq s_3$, then $w(K^n)-w(K^{n-1}) \geq 18+36(n-3)$. \end{lemma}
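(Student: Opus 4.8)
The statement has two parts. For the assertion that $K^n$ and $K^{n-1}$ present the same knot, I would verify that the moves pictured in Figure \ref{fig:Kn-1} compose to an ambient isotopy of $S^3$. The opening move applies Lemma \ref{lem:out} to the outermost layer, freeing a sub-strand that runs from the bottom boundary over a single maximum and meets nothing else; the remaining moves then slide this freed strand across the diagram so as to collapse the layer $X_{1,*}$ and lower the type from $n$ to $n-1$. Since every move is an isotopy, the two projections are of the same knot.

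For the width estimate I would use that the isotopy is supported away from the tangle $T=\{X_{4,1},\dots,X_{n,2}\}$. For $n\geq 4$, $T$ is nonempty, so I apply Lemma \ref{lem:addingtangle} to each projection and subtract; because $T$ is untouched, both $w(T)$ and its critical-point count $r$ are common to $K^n$ and $K^{n-1}$ and cancel, leaving
\[ w(K^n)-w(K^{n-1})=\big(w(K^n/T)-w(K^{n-1}/T)\big)+(l_n-l_{n-1})(r-1)+\big(w(P_n)-w(P_{n-1})\big), \]
where $l$ counts the strands running alongside $T$ and $P$ is the level sphere just above $T$. Collapsing the layer $X_{1,*}$ deletes its $2s_1$ strands from every waist-level sphere, so a short count gives $l_n-l_{n-1}=2s_1$ and $w(P_n)-w(P_{n-1})=2s_1$.

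The growth in $n$ then comes from $(l_n-l_{n-1})(r-1)=2s_1(r-1)$. Using Remark \ref{rmk:easy}(2) to bound $r\geq s_2+2s_3+\dots+2s_{n-1}+s_n$, the middle block satisfies $2(s_3+\dots+s_{n-1})\geq 6(n-3)$ since it has $n-3$ summands each at least $3$, so $r-1\geq 6(n-3)$, while $s_1\geq 3$ gives $2s_1\geq 6$; hence this term is at least $36(n-3)$. The constant $18$ is the residual base contribution $\big(w(K^n/T)-w(K^{n-1}/T)\big)+2s_1$, which I would compute directly from the three explicit layers $X_{1,*},X_{2,*},X_{3,*}$ via Lemma \ref{lem:thickandthin}, controlling the two affected thick spheres of widths $a$ and $b$ with Remark \ref{rmk:easy}(1), namely $a,b\geq s_1+s_2$. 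The case $n=3$, where $T$ is empty and the entire diagram is the outer part, is exactly this base computation and should yield the constant $18$ on its own. The hypothesis $s_1\leq s_3$ is what I would use to control the sign of $w(K^n/T)-w(K^{n-1}/T)$: pulling a wide outer layer past a narrow bottleneck at $X_{3,*}$ could otherwise lower the outer width, and $s_1\leq s_3$ is precisely the condition preventing this term from eroding the bound.

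The principal obstacle I anticipate is the sphere-by-sphere bookkeeping, in particular reconciling the Lemma \ref{lem:thickandthin} accounting of the outer layers with the Lemma \ref{lem:addingtangle} accounting of $T$ so that no level sphere is dropped or double-counted. A secondary difficulty is the degenerate configurations flagged before Remark \ref{rmk:easy}: when $X_{2,1}$ or $X_{2,2}$ fails to contain both a maximum and a minimum, the list of affected thick and thin spheres differs from the generic one, and each such case must be checked separately. I expect the inequalities of Remark \ref{rmk:easy} together with $s_i\geq 3$ to close the estimate in every case, but confirming this is where the real work lies.
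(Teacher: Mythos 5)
Your bookkeeping matches the paper's up to the point of assembling the estimate: the paper also applies Lemma \ref{lem:addingtangle} to both projections with the common tangle $T$, and its values $l_n=1+2s_1$, $l_{n-1}=1$, $w(P)=2s_1+s_2+1$, $w(R)=s_2+1$ give exactly your $l_n-l_{n-1}=2s_1$ and $w(P_n)-w(P_{n-1})=2s_1$. The gap is in how you propose to finish. You split the bound into two separately proved pieces: $2s_1(r-1)\geq 36(n-3)$ and $\bigl(w(K^n/T)-w(K^{n-1}/T)\bigr)+2s_1\geq 18$. The second inequality is false. Computing the base term with Lemma \ref{lem:thickandthin} (the affected thick spheres have widths $a+s_1\pm 1$ and $b+s_1\pm 1$, the affected thin spheres are $P$ and $R$) gives
\[
\bigl(w(K^n/T)-w(K^{n-1}/T)\bigr)+2s_1 \;=\; 2(a+b)+4s_1-2s_1^2-2s_1s_2,
\]
and Remark \ref{rmk:easy}.1 bounds $a,b$ only from \emph{below}. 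Taking $a=b=s_1+s_2$ with $s_1=s_2=3$ this expression equals $0$; with $s_1=s_2=10$ it equals $-280$, and it is unboundedly negative as $s_1,s_2$ grow. Moreover, the hypothesis $s_1\leq s_3$ cannot rescue this term where you invoke it, because $s_3$ does not appear in it at all: $s_3$ enters the whole estimate only through the critical-point count $r$ of $T$, via Remark \ref{rmk:easy}.2.

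This is precisely why the paper does not split the estimate the way you do: the quadratic deficit $-2s_1^2-2s_1s_2$ must be absorbed by the cross term $2rs_1$, and that absorption is where both hypotheses are spent. From Remark \ref{rmk:easy}.2, $2rs_1\geq 2s_1(s_2+2s_3+2s_4+\dots+2s_{n-1}+s_n)$; the summand $2s_1s_2$ cancels $-2s_1s_2$, and $s_1\leq s_3$ is used in the form $2s_1(2s_3)\geq 2s_1(s_1+s_3)$ to cancel $-2s_1^2$. Only the leftover $30+2s_1(s_3+2s_4+\dots+2s_{n-1}+s_n)\geq 30+6\bigl(3+6(n-4)+1\bigr)=18+36(n-3)$ yields the stated bound, the constant $30$ coming from $6s_1+4s_2\geq 30$. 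Your treatment of $n=3$ inherits the same defect: $T$ is not negligible there (Remark \ref{rmk:easy}.2 still gives $r\geq s_2+s_3$), and as computed above the base contribution on its own is $0$ at the minimal values $s_1=s_2=3$, $a=b=6$, so the constant $18$ cannot be extracted without mixing in $2rs_1$ and the hypothesis $s_1\leq s_3$ even in that case.
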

\begin{proof}

 The isotopy between $K^n$ and $K^{n-1}$ is depicted in Figure \ref{fig:Kn-1}. For the second part of the claim, note that $K^n$ and $K^{n-1}$ can each be decomposed into two tangles as in Figure 2 such that and one of these tangles, $T$, is the same for both. To obtain $K^n$, $T$ is inserted into $K^n/T$ at a level sphere $P$ with width $w(P)=2s_1+s_2+1$ and to obtain $K^{n-1}$, $T$ is inserted into $K^{n-1}/T$ at a level sphere $R$ with width $w(R)=s_2+1$. Note that if $X_{2,1}$, say, only has maxima, then the level sphere directly below it is isotopic to the level sphere that intersects the braid box in $a$ points. Therefore, it is easy to see that the computations below hold even if $X_{2,1}$ only contains maxima or $X_{2,2}$ only contains minima. We prove the case when $n\geq 4$. If $n=3$ the last 5 lines require minor modifications which we leave to the reader.

\vspace{-.5cm}

\begin{flalign*}
&w(K^n)-w(K^{n-1}) &\textrm{(by \ref{lem:addingtangle})}\\
=&w(K^n/T)+w(T)+(1+2s_1)(r-1)+(2s_1+s_2+1)\\&-w(K^{n-1} /T)-w(T)-1(r-1)-(s_2+1) &\textrm{(by \ref{lem:thickandthin})}\\
\end{flalign*}

\vspace{-1.25cm}

\begin{flalign*}
=&\frac{1}{2}[(a+s_1+1)^2+(b+s_1+1)^2-(2s_1+s_2+1)^2-(a+s_1-1)^2-(b+s_1-1)^2\\&+(s_2+1)^2]+2rs_1\\
\end{flalign*}

\vspace{-1.25cm}

\begin{flalign*}
 =&2(a+s_1)+2(b+s_1)-2s_1^2-2s_1s_2-2s_1+2rs_1&\textrm{(by \ref{rmk:easy}.1)}\\
\geq& 6s_1+4s_2-2s_1^2-2s_1s_2+2rs_1& \textrm{(as $s_i \geq 3$ for $i\leq n-1$)}\\
\geq& 30-2s_1^2-2s_1s_2+2rs_1&\textrm{(by \ref{rmk:easy}.2)}\\
\geq&30-2s_1^2-2s_1s_2+2s_1(s_2+2s_3+2s_4+...+2s_{n-1}+s_n)&\\
= &30-2s_1^2+2s_1(2s_3+2s_4+...+2s_{n-1}+s_n)&\textrm{($s_1 \leq s_3$)}\\
\geq&30-2s_1^2+2s_1(s_1+s_3+2s_4+...+2s_{n-1}+s_n)\\
=&30+2s_1(s_3+2s_4+...+2s_{n-1}+s_n)&\textrm{($s_i \geq 3$ for $i\leq n-1$)}\\
\geq&30+6(3+6(n-4)+1)\geq 18+36(n-3).
\end{flalign*}
\end{proof}

Now, we consider a second isotopy presented in Figure \ref{fig:Kn3}. We will need the following remark:

\begin{rmk}\label{rmk:iso} Raising a maxima above a minima  adds $4$ to the width. Raising a minima above a maxima  lowers the width by $4$. Passing two minima or two maxima past each other does not affect the width.
\end{rmk}

\begin{figure}
\begin{center} \includegraphics[scale=.4]{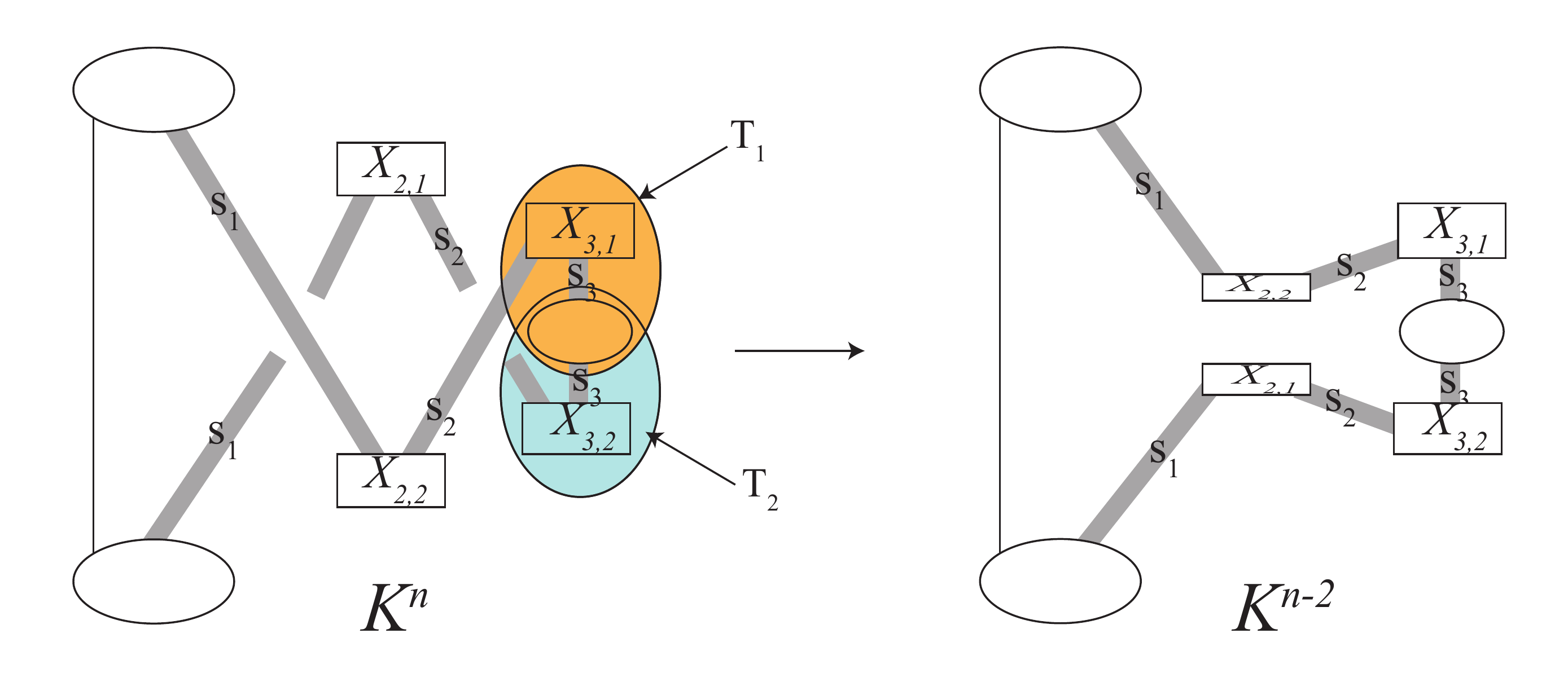}
\end{center}
\caption{} \label{fig:Kn3} \end{figure}

\begin{lemma}\label{lem:s1geqs3}
Let $K^n$ and $K^{n-2}$ be the projections of knots of type $n$ and $n-2$ respectively depicted in Figure \ref{fig:Kn3} where $n>2$ and $s_{i} \geq 3$ for $1 \leq i \leq n-1$. Then $K^n$ and $K^{n-2}$ are projections of the same knot. Furthermore, if $s_1\geq s_3$, then $w(K^n)-w(K^{n-2}) \geq max(72n-204,36)$.
\end{lemma}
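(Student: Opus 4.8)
The first assertion is the easy one: Figure \ref{fig:Kn3} exhibits an explicit isotopy carrying $K^n$ to $K^{n-2}$, so they are projections of the same knot, and the entire content of the lemma is the width estimate. My plan is to reuse the architecture of the proof of Lemma \ref{lem:s1leqs3}, with two changes — the isotopy of Figure \ref{fig:Kn3} removes two levels of the diagram instead of one, and the hypothesis $s_1 \ge s_3$ replaces $s_1 \le s_3$.

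To compute $w(K^n) - w(K^{n-2})$ I would track the width directly through the isotopy using Remark \ref{rmk:iso}: the isotopy is a composition of elementary moves, each of which raises a maximum past a minimum (or conversely, or slides like-type critical points past each other), and each such move changes the width by a known amount ($\pm 4$ or $0$). Summing these contributions, where a braid box carrying several critical points sliding past a bundle of $s_i$ strands produces a term proportional to the corresponding product, yields a closed algebraic expression in $a$, $b$, and the $s_i$. As a check, the same number can be obtained exactly as in the previous lemma by decomposing $K^n$ and $K^{n-2}$ into a common tangle $T$, applying Lemma \ref{lem:addingtangle} so that the $w(T)$ terms cancel, and evaluating the difference of the remaining widths with Lemma \ref{lem:thickandthin}; only the finitely many thick and thin spheres in the region swept by the isotopy contribute, and (as in Lemma \ref{lem:s1leqs3}) the degenerate cases in which $X_{2,1}$ or $X_{2,2}$ carries only maxima or only minima cause no trouble.

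With the algebraic expression in hand I would bound it below using Remark \ref{rmk:easy}, namely $a,b \ge s_1 + s_2$ and $r \ge s_2 + 2s_3 + \cdots + 2s_{n-1} + s_n$, together with $s_i \ge 3$ for $i \le n-1$. The essential point is the appearance of a negative quadratic term: whereas in Lemma \ref{lem:s1leqs3} a term $-2s_1^2$ was absorbed by using $s_3 \ge s_1$ to replace a factor $2s_3$ by the smaller $s_1 + s_3$, here the reverse inequality $s_1 \ge s_3$ plays the symmetric role, allowing the analogous negative square to be cancelled against a matching positive term, after which the surviving sum of the $s_i$ contributions is positive. Because two levels rather than one are removed, this surviving sum roughly doubles, accounting for the coefficient $72$ in place of $36$.

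Finally I would read off the two regimes recorded in $\max(72n-204,36)$. For $n \ge 4$, summing the per-level bounds $s_i \ge 3$ gives $72n - 204$ (equivalently $12 + 72(n-3)$), which already exceeds $36$; the boundary case $n = 3$, in which $K^{n-2}$ is of type $1$ and a few of the affected spheres degenerate, is handled by a separate direct computation giving exactly $36$, consistent with $72\cdot 3 - 204 = 12 < 36$. The step I expect to be the main obstacle is the second one: correctly enumerating which critical points are moved past which during the two-level isotopy of Figure \ref{fig:Kn3}, and hence which thick and thin spheres change and with what widths in terms of $a$, $b$, and the $s_i$ — an error there feeds straight into the constant $204$ and the coefficient $72$ of the final bound.
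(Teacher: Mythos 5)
Your high-level route is indeed the paper's own: realize the isotopy of Figure \ref{fig:Kn3} in two vertical stages (lower $X_{2,1}$ past $T_1$, then raise $X_{2,2}$ past $T_2$ and past $X_{2,1}$) and total the width changes using Remark \ref{rmk:iso}. But the step you defer --- and yourself flag as the likely obstacle --- is where your predicted answer is materially wrong, and that is a genuine gap rather than a detail. Passing critical points across vertical strands changes nothing; what the bookkeeping produces is an expression in the numbers of maxima and minima of the moved boxes and of the tangles they cross, not ``a closed algebraic expression in $a$, $b$, and the $s_i$.'' Writing $M_i,m_i$ for the maxima/minima of $X_{2,i}$ and $M_{T_i},m_{T_i}$ for those of $T_i$, the count is
\[
w(K^{n})-w(K^{n-2})=-4m_{1}M_{T_{1}} + 4M_{1}m_{T_{1}} - 4M_{2}m_{T_{2}} + 4m_{2}M_{T_{2}} - 4M_{2}m_{1} +4m_{2}M_{1},
\]
and only after the strand-count relations $M_{1}= m_{1} + \frac{1}{2}(s_{1}+s_{2})$, $M_{2}= m_{2} - \frac{1}{2}(s_{1}+s_{2})$, $M_{T_{1}}= m_{T_{1}} + \frac{1}{2}(s_{2}+s_{3})$, $M_{T_{2}}= m_{T_{2}} - \frac{1}{2}(s_{2}+s_{3})$ does it collapse to
\[
w(K^{n})-w(K^{n-2})=2(m_{1}+m_{2})(s_{1}-s_{3}) + 2(s_{1}+s_{2})(m_{T_{1}}+m_{T_{2}}).
\]
The quantity $m_{T_1}+m_{T_2}$ is not determined by $a$, $b$, and the $s_i$; it is only bounded \emph{below} by strand counts ($m_{T_{1}} \geq \frac{1}{2}s_{3} + s_{4} + \cdots + s_{n-1} + \frac{1}{2}s_{n}$, and similarly for $m_{T_2}$), and these lower bounds --- not Remark \ref{rmk:easy} and not $a,b\geq s_1+s_2$, neither of which enters this lemma at all --- are the source of the factor $6n-17$ and hence of $72n-204$.

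Two further consequences of not having the identity in hand. First, there is no negative quadratic term here: the hypothesis $s_1\geq s_3$ is used only to discard the sign-indefinite bilinear term $2(m_1+m_2)(s_1-s_3)$; nothing is ``cancelled against a matching positive term.'' Second, and more seriously, your plan for $n=3$ (``a separate direct computation giving exactly $36$'') cannot run on the surviving term alone: when $n=3$ and $s_3=1$, that term gives only $2(s_1+s_2)(m_{T_1}+m_{T_2})\geq 2\cdot 6\cdot 2=24<36$, and the paper must retain the first term, using $m_2\geq \frac{1}{2}(s_1+s_2)\geq 3$ and $s_1-s_3\geq 2$ to supply the missing $12$. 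So the boundary case depends on exactly the structure your outline mispredicts. (Your proposed cross-check via Lemmas \ref{lem:addingtangle} and \ref{lem:thickandthin} also does not transfer as stated: here two boxes move past two different tangles and past each other, so there is no single tangle $T$, with everything else vertical between two parallel level spheres, of the kind Lemma \ref{lem:addingtangle} requires.)
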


\begin{proof}
Let $M_{i}$ (respectively $m_{i}$) denote the number of maxima (respectively minima) in braid box $X_{2,i}$ for $i=1,2$. Let $T_{1}$ and $T_{2}$ be the tangles illustrated in Figure \ref{fig:Kn3}. Let $M_{T_{i}}$ (respectively $m_{T_{i}}$) denote the number of maxima (respectively minima) in the tangle $T_{i}$ for $i =1,2$. We can think of the isotopy of $K^{n}$ to $K^{n-2}$ as done in two stages. First, vertically lower $X_{2,1}$ past all of $T_{1}$ until it lies strictly between $T_{1}$ and $X_{3,2}$. Second, vertically raise $X_{2,2}$ past all of $T_{2}$ and all of $X_{2,1}$ until it lies strictly between $T_{2}$ and $X_{3,1}$.

By Remark \ref{rmk:iso}, vertically lowering $X_{2,1}$ past all of $T_{1}$ changes the width by $4m_{1}M_{T_{1}} - 4M_{1}m_{T_{1}}$. Vertically raising $X_{2,2}$ past all of $T_{2}$ and all of $X_{2,1}$ changes the width by $4M_{2}m_{T_{2}} - 4m_{2}M_{T_{2}} + 4M_{2}m_{1} -4m_{2}M_{1}$. Thus,
$$w(K^{n})-w(K^{n-2})=-4m_{1}M_{T_{1}} + 4M_{1}m_{T_{1}} - 4M_{2}m_{T_{2}} + 4m_{2}M_{T_{2}} - 4M_{2}m_{1} +4m_{2}M_{1}.$$

Because $s_{1} + s_{2}$ strands are entering $X_{2,1}$ from below, $M_{1}= m_{1} + \frac{1}{2}(s_{1}+s_{2})$. Similarly $M_{2}= m_{2} - \frac{1}{2}(s_{1}+s_{2})$, $M_{T_{1}}= m_{T_{1}} + \frac{1}{2}(s_{2}+s_{3})$, and $M_{T_{2}}= m_{T_{2}} - \frac{1}{2}(s_{2}+s_{3})$. By substituting these values into the above equation and simplifying, we get:

\begin{flalign*}
&w(K^{n})-w(K^{n-2})&\\
&=2(m_{1}+m_{2})(s_{1}-s_{3}) + 2(s_{1}+s_{2})(m_{T_{1}}+m_{T_{2}})&\textrm{(as $s_1 \geq s_3$)}\\
&\geq 2(s_{1}+s_{2})(m_{T_{1}}+m_{T_{2}})&\\
\end{flalign*}

 If $n \geq 4$, then $m_{T_{1}} \geq \frac{1}{2}(s_{3}) + s_{4} + ... +s_{n-1} + \frac{1}{2}(s_{n})$ and $m_{T_{2}} \geq \frac{1}{2}(s_{2}) + s_{3} + ... +s_{n-1} + \frac{1}{2}(s_{n})$. Additionally, $s_{i} \geq 3$ for $1\leq i \leq n-1$ and $s_{n} \geq 1$. Thus, $m_{T_{1}} \geq 3(n-4)+2$ and $m_{T_{2}} \geq 3(n-3)+2$. Hence, $w(K^{n})-w(K^{n-2}) \geq 2(s_{1}+s_{2})(6n-17) \geq 12(6n-17) = 72n-204$.

If $n=3$ and $s_{3} \geq 3$, then $m_{T_{1}} \geq 0$ and $m_{T_{2}} \geq 3$. Hence, $w(K^{n})-w(K^{n-2}) \geq 2(s_{1}+s_{2})(3) \geq 12(3) = 36$.

If $n=3$ and $s_{3} = 1$, then $m_{T_{1}} \geq 0$ and $m_{T_{2}} \geq 2$. Since $s_{1} \geq 3$ and $s_{2} \geq 3$, then
$m_{1} \geq 0$ and $m_{2} \geq 3$. Hence, $w(K^{n})-w(K^{n-2}) \geq 2(0+3)(3-1) + 2(3+3)(0+2) = 36$.

Since $72n-204 \geq 36$ for $n\geq 4$, $w(K^{n})-w(K^{n-2}) \geq max(72n-204,36)$.

\end{proof}

\begin{thm}\label{thm:s1leqs3}
If $n > 2$ and $s_i\geq 3$ for $i=1,...,n-1$, then $w(\mathcal{K}^n)\leq w(K^n)-2n^2$.
\end{thm}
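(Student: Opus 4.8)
The goal is to show that the thin-position width $w(\mathcal{K}^n)$ is strictly smaller than $w(K^n)$ by at least $2n^2$, which proves that the Scharlemann--Thompson projection $K^n$ is not thin (in fact it cannot even be within $2n^2$ of thin). The natural plan is to use Lemmas \ref{lem:s1leqs3} and \ref{lem:s1geqs3} as the two available thinning moves and to argue by induction on $n$. The key observation is that the hypothesis $s_1 \leq s_3$ of Lemma \ref{lem:s1leqs3} and the hypothesis $s_1 \geq s_3$ of Lemma \ref{lem:s1geqs3} are exhaustive: for any type-$n$ projection exactly one of the two must hold, so at least one of the two isotopies is always available. Applying whichever one is valid produces a strictly thinner projection of the same knot, which is either of type $n-1$ or of type $n-2$, and on which we can recurse.

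First I would set up the induction. The base cases $n=3$ and $n=4$ must be handled directly: here one application of Lemma \ref{lem:s1leqs3} or Lemma \ref{lem:s1geqs3} already drops $n$ below $3$ (i.e.\ to a projection to which the type-$n$ structure no longer applies), and I must check that the single width decrease guaranteed by the relevant lemma — namely $18 + 36(n-3)$ or $\max(72n-204,36)$ — is at least $2n^2$. For $n=3$ this asks $18 \geq 18$ and $36 \geq 18$, both fine; for $n=4$ it asks $54 \geq 32$ and $84 \geq 32$, again fine. For the inductive step with $n \geq 5$, I would apply the appropriate lemma once to obtain a projection $K'$ with either $w(K^n) - w(K') \geq 18 + 36(n-3)$ (dropping to type $n-1$) or $w(K^n) - w(K') \geq 72n - 204$ (dropping to type $n-2$), then invoke the inductive hypothesis on $K'$ to get $w(\mathcal{K}^n) = w(\mathcal{K}') \leq w(K') - 2(n-1)^2$ or $\leq w(K') - 2(n-2)^2$ respectively (using that $K'$ is a projection of the same knot $\mathcal{K}^n$).

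Combining these, in the type-$(n-1)$ case I need
\begin{align*}
w(\mathcal{K}^n) &\leq w(K') - 2(n-1)^2 \leq w(K^n) - \bigl(18 + 36(n-3)\bigr) - 2(n-1)^2,
\end{align*}
and it suffices to verify $18 + 36(n-3) + 2(n-1)^2 \geq 2n^2$, which simplifies to a linear inequality in $n$ that holds comfortably for all $n \geq 3$. In the type-$(n-2)$ case I need $72n - 204 + 2(n-2)^2 \geq 2n^2$, again a linear inequality that holds for $n$ large enough; the slightly delicate point is to confirm it at the smallest $n$ where this branch is used, and this is where I would be most careful, since the bound $72n-204$ degrades for small $n$. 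There is also a bookkeeping subtlety: I must ensure that after applying a lemma the resulting projection still satisfies the standing hypothesis $s_i \geq 3$ for the relevant indices, so that the inductive hypothesis genuinely applies; the $s_i$ of $K'$ are inherited from those of $K^n$, so this should carry through, but it needs to be stated. The main obstacle is therefore not any single hard estimate but rather the careful orchestration of the two-branch induction — matching the correct lemma to the $s_1$ versus $s_3$ dichotomy, tracking whether $n$ drops by one or two, and checking that the guaranteed width decrease always dominates the required $2n^2$ at every stage and in both base cases.
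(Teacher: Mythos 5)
Your proposal is correct and follows essentially the same route as the paper's own proof: the same two-branch induction on $n$ keyed to the dichotomy $s_1 \leq s_3$ (Lemma \ref{lem:s1leqs3}, drop to type $n-1$) versus $s_1 \geq s_3$ (Lemma \ref{lem:s1geqs3}, drop to type $n-2$), the same base cases $n=3,4$ handled by a single application of the relevant lemma, and the same arithmetic verifications $18+36(n-3)+2(n-1)^2 \geq 2n^2$ and $72n-204+2(n-2)^2 \geq 2n^2$ in the inductive step. Your added remark that one must check the hypothesis $s_i \geq 3$ is inherited by the thinner projection is a point the paper leaves implicit, and flagging it is appropriate.
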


\begin{proof}
We will prove this result by induction. For $n=3$, apply Lemma \ref{lem:s1leqs3} if $s_1\leq s_3$ or Lemma \ref{lem:s1geqs3} if $s_1>s_3$. Then either $$w(\mathcal{K}^3)\leq w(K^3)-18\leq w(K^3)-2(3^2)\textrm{,     or}$$
$$w(\mathcal{K}^3)\leq w(K^3)-36 \leq w(K^3)-2(3^2).$$ Similarly if $n=4$, $$w(\mathcal{K}^4)\leq w(K^4)-18-36\leq w(K^3)-2(4^2)\textrm{,     or}$$
$$w(\mathcal{K}^4)\leq w(K^4)-72(4)+204=w(K^4)-84\leq w(K^3)-2(4^2).$$

Now assume that the width of a projection in the form $K^{n-1}$ can be decreased by $2(n-1)^2$ and the width of a projection in the form $K^{n-2}$ can be decreased by $2(n-2)^2$. Consider a knot of the form $K^n$ with $n \geq 5$.

Case 1: First suppose $s_1\leq s_3$. By Lemma \ref{lem:s1leqs3}, the projections $K^n$ and $K^{n-1}$ are isotopic. Therefore, $\mathcal{K}^n=\mathcal{K}^{n-1}$. Using the inequality in the lemma, it follows that
\begin{flalign*}
&w(\mathcal{K}^n)=w(\mathcal{K}^{n-1}) &\textrm{(by induction hyp.)}\\
&\leq w(K^{n-1})-2(n-1)^2 &\textrm{(by 3.2)}\\
&\leq w(K^{n}) -18 -36(n-3) -2(n-1)^{2} &\textrm{(as $n\geq 5$)}\\
&\leq w(K^{n}) -2n^{2}\\
\end{flalign*}

Case 2: Suppose $s_1\geq s_3$. Applying Lemma \ref{lem:s1geqs3}:

\begin{flalign*}
&w(\mathcal{K}^n)=w(\mathcal{K}^{n-2}) &\textrm{(by induction hyp.)}\\
&\leq w(K^{n-2})-2(n-2)^2 &\textrm{(by 3.4)}\\
&\leq w(K^{n}) -72n +204 -2(n-2)^{2} &\textrm{(as $n\geq 5$)}\\
&\leq w(K^{n}) -2n^{2}\\
\end{flalign*}

\end{proof}

\begin{thm}\label{thm:siis1}
If $n \geq 2$ and $s_i=1$ for some $i=1,...,n-1$, then $w(K^n)-w(\mathcal{K}^n)\geq 2n^2-2$.
\end{thm}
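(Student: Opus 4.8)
The plan is to exhibit an explicit width-reducing isotopy that exploits the single strand, in the same spirit as Lemmas \ref{lem:s1leqs3} and \ref{lem:s1geqs3}, rather than to compute $w(K^n)$ and $w(\mathcal{K}^n)$ separately. Fix an index $i$ with $s_i=1$. The two grey strands joining the index-$i$ boxes to the index-$(i+1)$ boxes (one ascending out of $X_{i,2}$ into $X_{i+1,1}$, one descending out of $X_{i,1}$ into $X_{i+1,2}$) are then single arcs, so the entire collection of boxes of index exceeding $i$ is attached to the rest of the diagram along only two strands. I would first apply Lemma \ref{lem:out} to each of these two arcs to remove all of their crossings with the remainder of $K^n$ without changing the width; after this the two arcs form a clean neck and there is a level sphere meeting $K^n$ in exactly two points that separates the index-$\le i$ part from the index-$>i$ part.

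With this neck in place, I would slide the boxes of index greater than $i$ out of the region in which they are sandwiched between the boxes $X_{j,1}$ and $X_{j,2}$ of smaller index, dragging them below (or above) the rest of the diagram along the now-unobstructed neck. Each time an extremum of one of the moved boxes is carried past an extremum of a box it used to be interleaved with, Remark \ref{rmk:iso} records a change of $\pm 4$ in the width, and by moving minima up past maxima (equivalently maxima down past minima) every such crossing decreases the width by $4$. The combinatorial heart of the argument is to count these crossings: the number of extrema that must be passed grows like the product of the number of boxes being moved and the number they must clear, which is quadratic in $n$, exactly as in the bound of Theorem \ref{thm:s1leqs3}.

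Finally I would assemble the bookkeeping. Using Lemma \ref{lem:thickandthin} together with Lemma \ref{lem:addingtangle} to express the width before and after the move, and the counting inequalities of Remark \ref{rmk:easy} (now with the single value $s_i=1$ inserted, and $s_j\ge 1$ for the remaining indices), I would show the net decrease is at least $2n^2-2$, handling the base case $n=2$, $s_1=1$ (where the bound is $6$) by a direct computation. The step I expect to be the main obstacle is the geometry when the single strand sits at an interior index $i$ rather than at $i=1$: then boxes on both sides of the neck must be reorganized, and I must ensure that sliding one family of boxes past another never forces a compensating increase in width (so that Remark \ref{rmk:iso} is always applied in the width-decreasing direction) and that the resulting count of passed extrema still meets the quadratic lower bound $2n^2-2$ uniformly in the position of $i$.
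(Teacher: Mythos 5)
There is a genuine gap, and it is structural: the paper's proof of this theorem is not an isotopy argument at all, whereas your entire plan rests on an explicit width-decreasing isotopy whose key step is unjustified. You assert that, when sliding the index-$>i$ boxes clear of the index-$\le i$ boxes, ``every such crossing decreases the width by $4$.'' This is false in general. When a clump containing both maxima and minima is slid vertically past another such clump, Remark \ref{rmk:iso} gives a net change of the form $4(mM'-Mm')$: an \emph{imbalance} between favorable and unfavorable passes, not a count of passes. Whether this imbalance is even negative depends on the numbers of maxima and minima in the boxes, which are in turn forced by the strand numbers $s_j$; this is precisely why Lemma \ref{lem:s1leqs3} needs the hypothesis $s_1\le s_3$ and Lemma \ref{lem:s1geqs3} needs $s_1\ge s_3$. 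Under the hypothesis of this theorem (only that some $s_i=1$) you have no control over these counts, so the sliding you describe may increase the width, and your acknowledged ``main obstacle'' is in fact fatal to the approach as stated. Moreover, even if the sliding could always be arranged to be width-nonincreasing, the savings it produces is not uniformly quadratic in $n$: when the single strand sits at $i=1$ or $i=n-1$, one of the two clumps consists of a single box, and the un-interleaving term (the analogue of the paper's $(l+1)r+2$) is only linear in $n$, far short of $2n^2-2$.

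What the hypothesis $s_i=1$ actually buys --- and what your proposal never uses --- is that $K^n$ is a \emph{connected sum}: the projection decomposes as $K^p\# K^q$ with $p+q=n$, split along a sphere meeting the knot twice. The paper exploits this in three steps: (1) Lemma \ref{lem:addingtangle} applied to the tangle $T^q$ gives the exact lower bound $w(K^n)\ge w(K^p)+w(K^q)+(l+1)r$ for the given projection, with no isotopy performed; (2) the standard subadditivity $w(\mathcal{K}^n)\le w(\mathcal{K}^p)+w(\mathcal{K}^q)-2$ bounds the true width from above by stacking thin projections of the summands --- projections that need not be reachable by any controlled isotopy of your diagram; (3) the difference $w(K^n)-w(\mathcal{K}^n)$ then dominates $[w(K^p)-w(\mathcal{K}^p)]+[w(K^q)-w(\mathcal{K}^q)]+(l+1)r+2$, and the quadratic bound $2n^2-2$ is assembled by induction on $n$, invoking Theorem \ref{thm:s1leqs3} for summands with all $s_j\ge 3$ and the induction hypothesis for composite summands. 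The quadratic growth comes from the summands' own excess widths accumulated recursively, not from the count of extrema passed in any single rearrangement; any repair of your argument would have to import both the connected-sum observation and this recursion, at which point you would have reconstructed the paper's proof.
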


\begin{proof}
Suppose $s_p=1$ for some $1 \leq p \leq n-1$. In this case, $K^n$ is composite with summands of the form $K^p$ and $K^q$ (one or both summands might be the unknot) where $p+q=n$. Let $l=1+2s_1+...+2s_{p-1}$ and let $r$ be the sum of the number of critical points in braid boxes $X_{p+1,1},X_{p+1,1},..X_{n,1},X_{n,2}$.
Consider the tangle $T^q$ indicated in Figure \ref{fig:tangle} and the associated knot $K^q$. By Lemma \ref{lem:addingtangle}, it follows that $w(K^n)\geq w(K^p)+w(T^q)+l(r-1)+(l+1)$. On the other hand, $w(T^q)= w(K^q)+(r-1)$. We know that $w(\mathcal{K}^n)\leq w(\mathcal{K}^p)+w(\mathcal{K}^q)-2$ and, therefore,
$$w(K^n)-w(\mathcal{K}^n)\geq w(K^p)-w(\mathcal{K}^p)+w(K^q)-w(\mathcal{K}^q)+(l+1)r+2.$$ Note that $l\geq 2p-1$ and $r\geq 2q$.

\begin{figure}
\begin{center} \includegraphics[scale=.5]{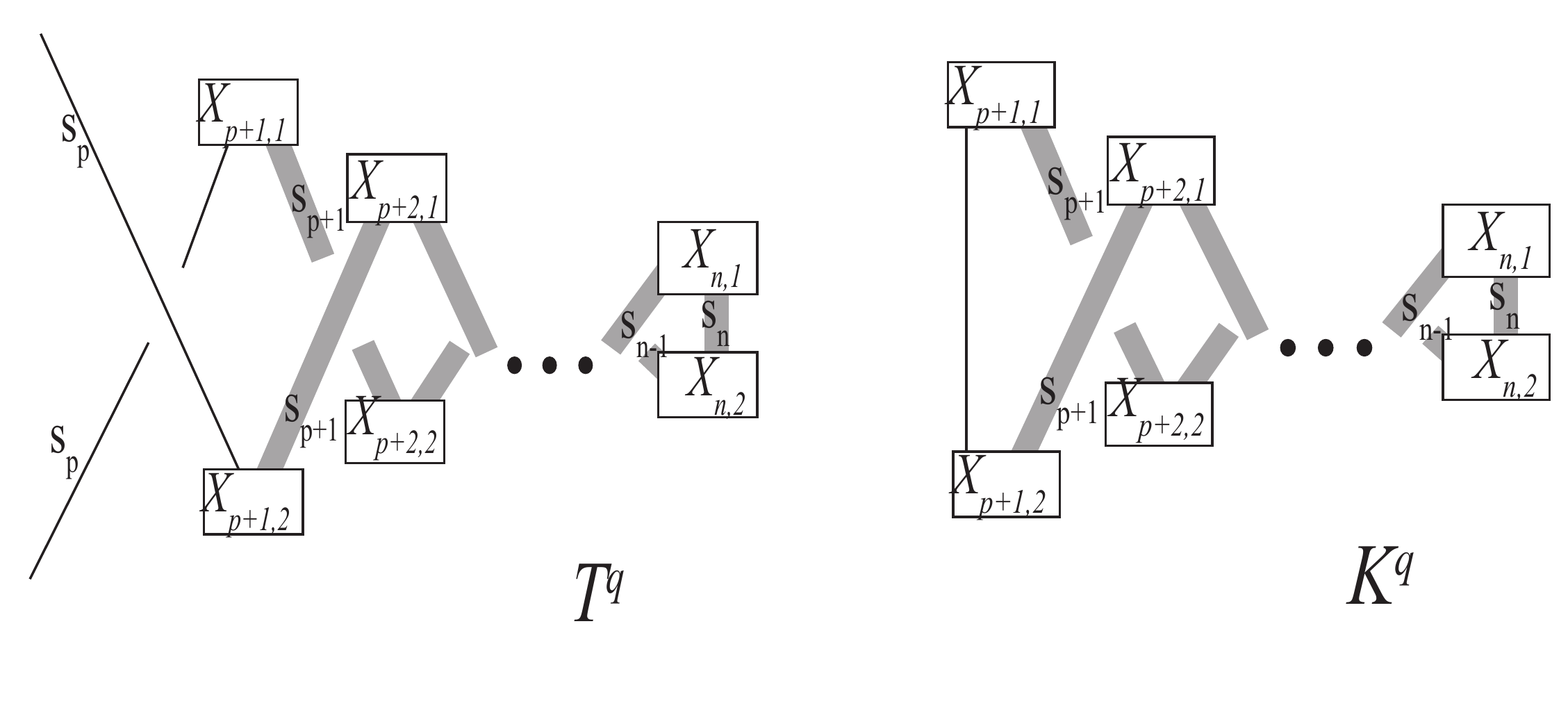}
\end{center}
\caption{} \label{fig:tangle} \end{figure}

Suppose first that $n=2$. In this case, $p=q=1$ and so $l=1$ and $r\geq 2$. Thus, $w(K^2)-w(\mathcal{K}^2)\geq (l+1)r+2\geq6= 2(2^2)-2$.

Suppose $n=3$, $p=1$ and $q=2$. If $r=4$, it is clear that $K^q$ is the unknot. So, by undoing two Reidemeister 1 moves, we obtain $w(K_3)-w(\mathcal{K}^3)\geq 16=2n^2-2$.
 If $r=6$, it is easy to check that $s_2=1$ as well. So, $K^q$ is composite. Therefore, by the previous case, $w(K^p)-w(\mathcal{K}^p)+w(K^q)-w(\mathcal{K}^q)+(l+1)r+2\geq 6+12+2\geq 16=2n^2-2$. If $r\geq 8$, it suffices to note that $(l+1)r+2\geq 18>2n^2-2$.

Suppose $n=3$, $p=2$ and $q=1$. If $s_{1}=1$, we can appeal to the previous case. Hence, we can assume $s_{1}\geq 3$ and therefore $l\geq7$. In this case, it is sufficient to note that $(l+1)r+2\geq 8(2)+2> 2n^2-2$.

We will prove the result by induction. Suppose that $n\geq 4$, the result holds for all $2\leq k\leq n-1$ and $K^n=K^p\#K^q$. In addition, if $s_i\geq 3$ for all $i=1,..,k-1$ and $k\geq 3$, then $w(K^k)-w(\mathcal{K}^k)\geq 2p^2-2$ by Theorem \ref{thm:s1leqs3}. If $p=1$ or $p-1$, using the induction hypothesis or Theorem \ref{thm:s1leqs3}, we obtain:
\begin{flalign*}
&w(K^p)-w(\mathcal{K}^p)+w(K^q)-w(\mathcal{K}^q)+(l+1)r+2\\
&\geq 2(n-1)^2-2+2(2(n-1))+2=2n^2-2\\
\end{flalign*}

Suppose $p=q=2$. Note that either we are in the previous case, or $l\geq 7$ so we obtain:
\begin{flalign*}
&w(K^p)-w(\mathcal{K}^p)+w(K^q)-w(\mathcal{K}^q)+(l+1)r+2\\
&\geq (8)(4)+2>2n^2-2\\
\end{flalign*}

Suppose $p=2$ or $p-1$ and $n> 4$. Either we can assume that $p=1$, a case we have already considered, or $s_1 \geq 3$ and thus $l+1 \geq 2p+4$. Using the induction hypothesis or Theorem \ref{thm:s1leqs3}, we obtain:
\begin{flalign*}
&w(K^p)-w(\mathcal{K}^p)+w(K^q)-w(\mathcal{K}^q)+(l+1)r+2\\
&\geq 2(n-2)^2+(2p+4)(2(n-p))=2[(n-2)^2+2(p(n-p))+4(n-p)]\\
&>2[(n-2)^2+2(2(n-2))+4]>2n^2-2\\
\end{flalign*}

Finally, if  $2 \leq p\leq n-2$ and $n>4$, we obtain:
\begin{flalign*}
&w(K^p)-w(\mathcal{K}^p)+w(K^q)-w(\mathcal{K}^q)+(l+1)r+2\\
&\geq 2p^2-2+2(n-p)^2-2+(2p)(2(n-p))+2=2n^2-2.\\
\end{flalign*}
\end{proof}

\begin{mr}
For all $n>2$, the examples of knot projections proposed in \cite{ST} which satisfy $w(K^n \#B^n)= w(K^n)$ also satisfy the inequality $w(K^n \#B^n)\geq w(\mathcal{K}^n)+ w(B^n)-2$ and are, therefore, not counterexamples to $w(\mathcal{K}\#\mathcal{K}')\leq w(\mathcal{K})+w(\mathcal{K}')-2$ as proposed.

\end{mr}

\begin{proof}
Note that all $s_i$ must be odd. If for some $1\leq i\leq n-1$ $s_i=1$, then, by Theorem \ref{thm:siis1}, $w(K^n)=w(K^n \#B^n) \geq w(\mathcal{K}^n)+2n^2-2 \geq w(\mathcal{K}^n)+w(B^n)-2$. Thus, we may assume $s_i\geq 3$ for all $i<n$.  In this case, $w(K^n)=w(K^n \#B^n) \geq w(\mathcal{K}^n)+2n^2 > w(\mathcal{K}^n)+w(B^n)-2$, by Theorem \ref{thm:s1leqs3}.

\end{proof}

\section{Extending Results and Open Questions}

In this section, we briefly consider a much larger class of knots containing the knots of type-$n$ described above. Suppose a knot $\mathcal{K}$ has an embedding $K$ as a wrapping number one companion of the unknot $U$, with $U$ in $n$-bridge position and some meridian disk of the unknotted torus intersects $K$ in a single point. See Figure \ref{fig:wrappingtorus}. We will call these knots generalized type-$n$ knots and the particular embedding will be called a generalized type $n$-embedding. Any knot for which the generalized type-$n$ embedding is in thin position would give an example where $w(\mathcal{K}\# \mathcal{K}')= max \{ \mathcal{K}, \mathcal{K}'\}$. In particular, if the companion unknot has bridge number $n$, then the connect sum of the knot with a knot for which $n$-bridge position and thin position coincide would give such an example. This leads us to the natural question:

\begin{qu}Is there a knot for which the generalized type-$n$ embedding is in thin position?\end{qu}

\begin{figure}
\begin{center} \includegraphics[scale=.3]{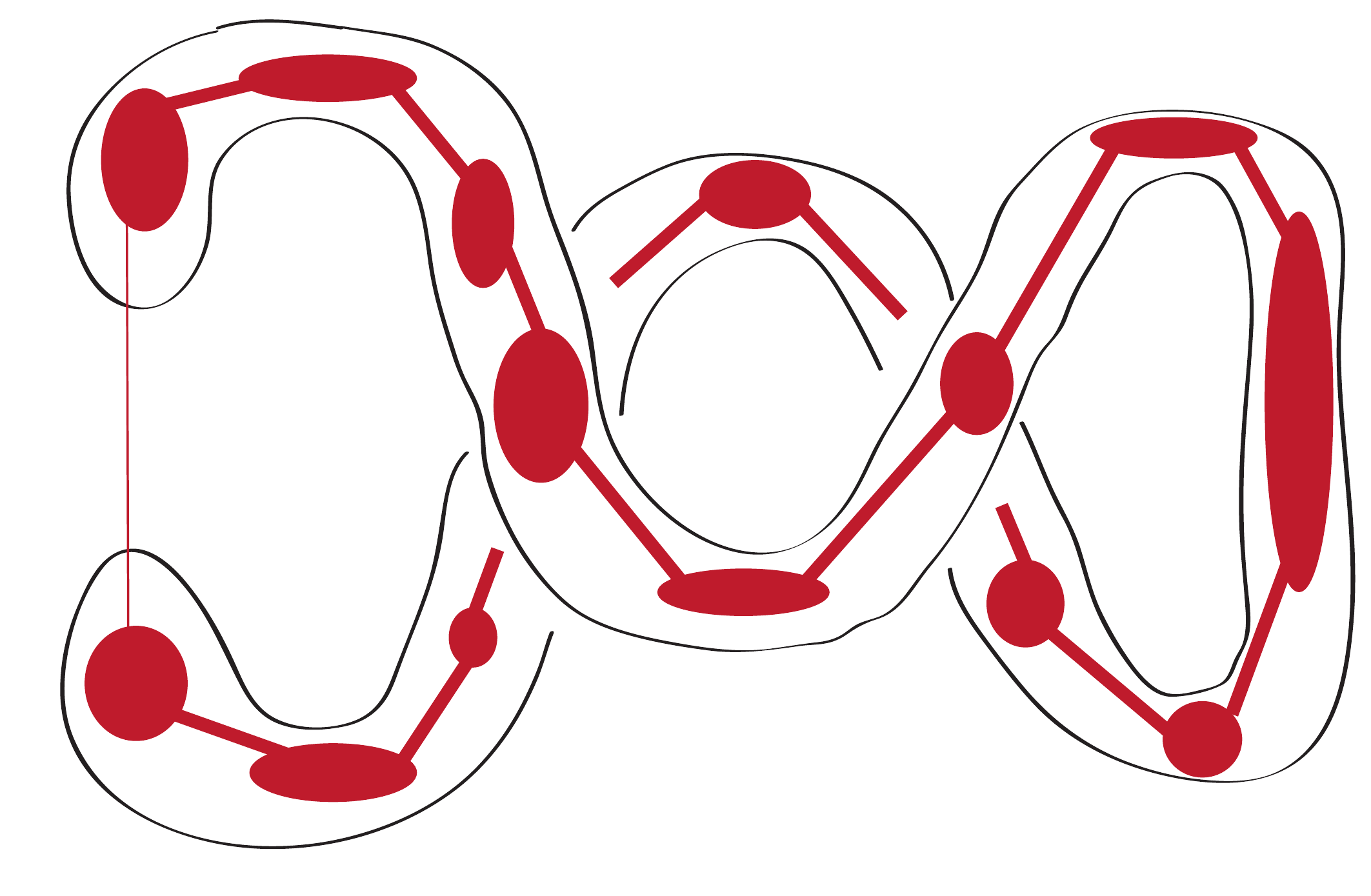}
\end{center}
\caption{The knot is depicted in red. Ovals represent tangles and thick lines represent multiple parallel strands.} \label{fig:wrappingtorus} \end{figure}

The previous section suggests that perhaps there are no such knots at least for $n>2$. Proving this more general result, though, seems very difficult. We can, however, generalize the results in the previous section to address some additional subsets of generalized type-$n$ knots and show that none of them gives a counterexample to width additivity. The proof uses techniques already introduced in the paper, so we will not provide details here.

First, consider generalized type-$n$ projections $K^n_g$, that satisfy all but the last requirement in the definition of a type-$n$ knot. See Figure \ref{fig:kng}.
\begin{figure}
\begin{center} \includegraphics[scale=.3]{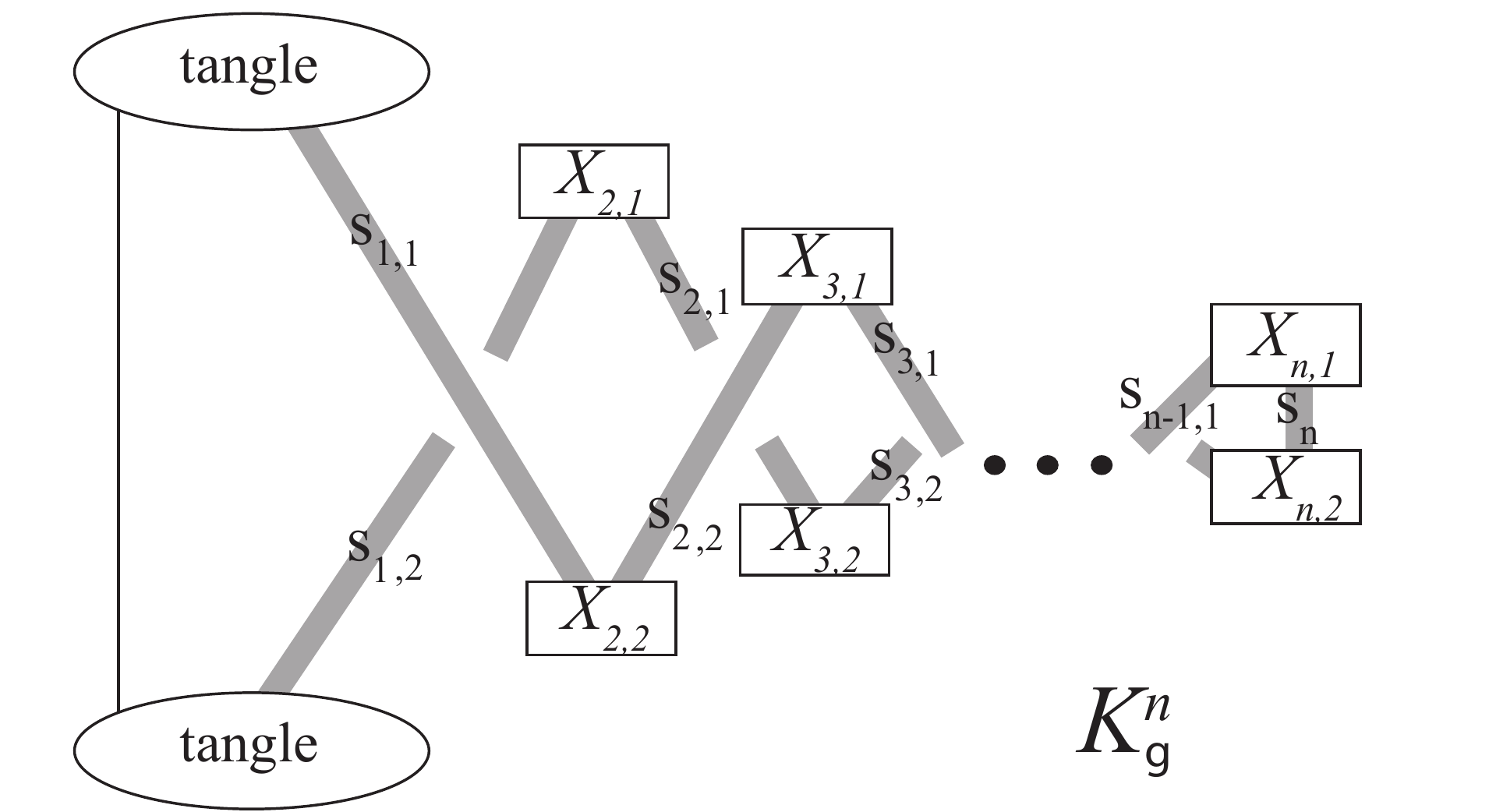}
\end{center}
\caption{} \label{fig:kng} \end{figure}

\begin{prop}
Let $K^n_g$ be the projection of the link depicted in Figure \ref{fig:kng}. If $n \geq 3$, and $s_{1,1}+s_{1,2} \leq s_{3,1}+s_{3,2}$, then $K^n_g$ is not in thin position. \end{prop}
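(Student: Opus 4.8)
The plan is to mimic the proof of Theorem \ref{thm:s1leqs3} in the case $s_1 \leq s_3$, which relied on Lemma \ref{lem:s1leqs3}. The key observation is that the final requirement in the definition of a type-$n$ knot (that the number of strands descending out of $X_{i,1}$ to the right equals the number ascending out of $X_{i,2}$ to the right) was used only to guarantee that the strand counts on the two sides match up so that the $s_i$ are well-defined single quantities. Dropping this requirement means that on the left side and right side we track separate counts, hence the notation $s_{i,1}$ and $s_{i,2}$ in Figure \ref{fig:kng}. So first I would set up the generalized version of the isotopy depicted in Figure \ref{fig:Kn-1}, sliding the middle tangle $T$ (containing braid boxes $\{X_{4,1},\dots,X_{n,2}\}$) to produce a generalized type-$(n-1)$ projection $K^{n-1}_g$, and verify that $K^n_g$ and $K^{n-1}_g$ are projections of the same link.

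Next I would redo the width computation of Lemma \ref{lem:s1leqs3} in this generalized setting. The roles of $a$ and $b$ (the maximum intersections of a level sphere with $X_{2,1}$ and $X_{2,2}$) and the level-sphere widths $w(P)$ and $w(R)$ at which $T$ is inserted must all be rewritten in terms of the refined quantities $s_{i,1}, s_{i,2}$. The analogues of Remark \ref{rmk:easy} are the essential inputs: I expect to need $a, b \geq s_{1,1}+s_{1,2}+s_{2,1}+s_{2,2}$ (or the appropriate one-sided versions) and a lower bound on the number $r$ of critical points of $T$ in terms of the $s_{i,j}$. After applying Lemma \ref{lem:addingtangle} and Lemma \ref{lem:thickandthin} exactly as in the earlier proof, the difference $w(K^n_g)-w(K^{n-1}_g)$ reduces to an expression where the hypothesis $s_{1,1}+s_{1,2}\leq s_{3,1}+s_{3,2}$ plays precisely the role that $s_1 \leq s_3$ played before: it lets me absorb a negative quadratic term $-(s_{1,1}+s_{1,2})^2/2$-type contribution against a positive $2(\text{something})(s_{3,1}+s_{3,2})$ cross-term, yielding a strictly positive lower bound on the width difference.

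I would then conclude that $w(K^n_g) > w(K^{n-1}_g) \geq w(\mathcal{K}^n_g)$, so $K^n_g$ is strictly wider than the minimum over its isotopy class and is therefore not in thin position. Because the statement only claims that $K^n_g$ is not thin (rather than a quantitative gap growing with $n$), a single application of the isotopy and the resulting strict inequality suffices; no induction on $n$ is needed here, which simplifies matters relative to Theorem \ref{thm:s1leqs3}.

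The main obstacle I anticipate is bookkeeping rather than conceptual. Because the left-right symmetry of strand counts is gone, every intermediate quantity ($a$, $b$, $w(P)$, $w(R)$, and the critical-point count $r$) splits into two independently-tracked pieces, and I must be careful that the isotopy of Lemma \ref{lem:out} and the tangle-insertion framework of Lemma \ref{lem:addingtangle} still apply verbatim when the tangle $T$ is no longer forced to have balanced strands entering and leaving on each side. The delicate point is confirming that the inequality $a,b \geq s_{1,1}+s_{1,2}$ and its partners survive in the unbalanced setting, since these are exactly what make the hypothesis $s_{1,1}+s_{1,2}\leq s_{3,1}+s_{3,2}$ sufficient to force the sign of the width change; once those combinatorial inequalities are secured, the algebra is a routine rerun of the computation in Lemma \ref{lem:s1leqs3}.
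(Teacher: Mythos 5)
Your proposal matches the paper's proof, which consists precisely of the remark that the proposition follows by naturally generalizing Lemma \ref{lem:s1leqs3} (the isotopy of Figure \ref{fig:Kn-1} together with the width computation via Lemmas \ref{lem:addingtangle} and \ref{lem:thickandthin}), with $s_{1,1}+s_{1,2}\leq s_{3,1}+s_{3,2}$ playing the role of $s_1\leq s_3$. Your additional observation that only positivity of the width difference is needed (so no induction and no quantitative bound) is a correct and sensible simplification consistent with the paper's intent.
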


\begin{proof} The proof is a natural generalization of Lemma \ref{lem:s1leqs3}.\end{proof}

\begin{prop}
Let $K^n_g$ be the projection of the link depicted in Figure \ref{fig:kng}. If $n \geq 3$, $s_{1,1}\geq s_{3,2}$ and $s_{1,2}\geq s_{3,1}$, then $K^n_g$ is not in thin position. \end{prop}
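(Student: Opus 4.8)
The plan is to adapt the two-stage isotopy argument from Lemma \ref{lem:s1geqs3} to the generalized setting, where the single strand count $s_i$ is replaced by the two independent counts $s_{i,1}$ (strands connecting $X_{i,1}$ to $X_{i+1,2}$) and $s_{i,2}$ (strands connecting $X_{i,2}$ to $X_{i+1,1}$). First I would set up the count-keeping analogous to the proof of Lemma \ref{lem:s1geqs3}: let $M_i, m_i$ denote maxima and minima in $X_{2,i}$ and $M_{T_i}, m_{T_i}$ denote maxima and minima in the two tangles $T_1, T_2$ obtained by splitting off $X_{2,1}$ and $X_{2,2}$. The isotopy is the same two-stage move: vertically lower $X_{2,1}$ past $T_1$ until it sits between $T_1$ and $X_{3,2}$, then vertically raise $X_{2,2}$ past $T_2$ and past $X_{2,1}$ until it sits between $T_2$ and $X_{3,1}$.

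Next I would apply Remark \ref{rmk:iso} to compute the width change stage by stage, exactly as in Lemma \ref{lem:s1geqs3}, obtaining an expression of the form
\[
w(K^n_g)-w(K^{n-2}_g)=-4m_1 M_{T_1}+4M_1 m_{T_1}-4M_2 m_{T_2}+4m_2 M_{T_2}-4M_2 m_1+4m_2 M_1.
\]
The crucial difference is the strand-balance relations. Because $s_{1,1}+s_{1,2}$ strands enter $X_{2,1}$ from below one still gets $M_1=m_1+\tfrac12(s_{1,1}+s_{1,2})$ and $M_2=m_2-\tfrac12(s_{1,1}+s_{1,2})$, but for the tangles the relevant strand differences split according to direction: the strands leaving the top of $T_1$ relate to $s_{3,1}$ and $s_{1,2}$ while those for $T_2$ relate to $s_{3,2}$ and $s_{1,1}$. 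I would carefully recompute these boundary relations for $T_1$ and $T_2$ and substitute them in. Under the hypotheses $s_{1,1}\ge s_{3,2}$ and $s_{1,2}\ge s_{3,1}$, the cross terms coming from the $(m_1+m_2)$ factor should combine into a nonnegative quantity of the shape $2(m_1+m_2)\big((s_{1,1}-s_{3,2})+(s_{1,2}-s_{3,1})\big)$ plus a strictly positive term $2(s_{1,1}+s_{1,2})(m_{T_1}+m_{T_2})$, forcing the width to strictly decrease and hence showing $K^n_g$ is not thin.

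The main obstacle I expect is pinning down the exact strand-balance relations for the two tangles $T_1$ and $T_2$ in the generalized picture, since dropping the last defining condition of a type-$n$ knot means the two strand families $s_{i,1}$ and $s_{i,2}$ are no longer forced to agree, and it is exactly the pairing of which strand count controls which tangle's maximum–minimum difference that makes the two separate hypotheses $s_{1,1}\ge s_{3,2}$ and $s_{1,2}\ge s_{3,1}$ (rather than a single inequality) the natural ones. Once those relations are correctly identified, verifying that both contributing terms are nonnegative is routine, and positivity of $m_{T_1}+m_{T_2}$ for $n\ge 3$ follows as in Lemma \ref{lem:s1geqs3}. I would also need to check the low boundary cases (for instance when a braid box contains only maxima or only minima) exactly as noted in the proof of Lemma \ref{lem:s1leqs3}, but these require only the same minor modifications already handled there.
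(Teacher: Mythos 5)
Your proposal takes exactly the approach the paper intends: the paper's entire proof of this proposition is the single sentence that it is ``a natural generalization of Lemma \ref{lem:s1geqs3},'' and your two-stage isotopy, the width-change bookkeeping via Remark \ref{rmk:iso}, and the recognition that the paired hypotheses $s_{1,1}\geq s_{3,2}$, $s_{1,2}\geq s_{3,1}$ arise from the now-decoupled strand-balance relations for $T_1$ and $T_2$ are precisely that generalization, worked out in more detail than the paper itself provides. The one point to nail down is the exact indexing in the boundary relations (e.g., whether the coefficient of $m_{T_1}+m_{T_2}$ involves level-$1$ and level-$2$ strand counts rather than $s_{1,1}+s_{1,2}$), which you correctly flag as the place requiring care.
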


\begin{proof} The proof is a natural generalization of Lemma \ref{lem:s1geqs3}.\end{proof}

\begin{prop}
Let $K^n_g$ be the projection of the link depicted in Figure \ref{fig:kng}. If $n \geq 3$ and there exist $i,j$ such that $s_{i,j}<s_{k,l}$ for all $(k,l)\neq(i,j)$ and $i\geq3$, then $K^n_g$ is not in thin position. \end{prop}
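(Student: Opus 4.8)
The plan is to produce an explicit width-reducing isotopy localized at the unique minimal bundle, generalizing the sliding argument of Lemma \ref{lem:s1geqs3}. Write $s_{\min}=s_{i,j}$ for the strictly minimal bundle; by hypothesis it sits in the gap between levels $i$ and $i+1$ with $i\geq 3$. First I would apply Lemma \ref{lem:addingtangle} to decompose $K^n_g$ into the four boxes $X_{i,1},X_{i,2},X_{i+1,1},X_{i+1,2}$ straddling this gap, together with a tangle $A$ collecting all of levels $1,\dots,i-1$ and a tangle $B$ collecting all of levels $i+2,\dots,n$. The condition $i\geq 3$ guarantees that $A$ contains at least the level-$1$ and level-$2$ boxes; this nonempty braided structure above the minimal gap is exactly what supplies the room to carry out the slide, playing the role that the level-$2$ boxes play in Lemma \ref{lem:s1geqs3}. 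Because the only requirement is a single width decrease, it is harmless that $B$ may be small or empty when $i$ is close to $n$: I would always move boxes \emph{upward} past $A$, so the argument never needs structure below the gap.

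Second, I would perform the reflected analogue of the isotopy of Lemma \ref{lem:s1geqs3}: slide the box incident to $s_{\min}$ from the thin side vertically past the adjacent tangle, after first using Lemma \ref{lem:out} to pull the relevant sub-strand free of crossings. By Remark \ref{rmk:iso}, the resulting width change is a signed sum of terms $\pm 4(\text{maxima})(\text{minima})$. Using the flux relation $M_{k,l}-m_{k,l}=\tfrac12(\text{bundles entering}-\text{bundles leaving})$ at each moved box to eliminate the maxima counts, the net change $w(K^n_g)-w(K')$ collapses, exactly as in Lemma \ref{lem:s1geqs3}, into a dominant sum of terms of the shape $2(\text{positive integer})(s_{k,l}-s_{\min})$ over the bundles $s_{k,l}$ adjacent to the moved boxes, plus lower-order correction terms. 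Since $s_{\min}=s_{i,j}$ is the unique \emph{strict} minimum over all bundles, every factor $s_{k,l}-s_{\min}$ with $(k,l)\neq(i,j)$ is strictly positive, so the dominant sum is strictly positive, giving $w(K^n_g)>w(K')$ and hence showing $K^n_g$ is not thin.

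I expect the main obstacle to be controlling the lower-order correction terms and the degenerate configurations, rather than the dominant estimate itself. When the minimal gap lies near the bottom (tangle $B$ small), when a box contains only maxima or only minima, or when one of the flux relations acquires a boundary correction, the simplification above does not close off automatically, and the correction terms can a priori be negative. Just as the earlier lemmas required separate handling of $n=3$ and of one-sided boxes, a short case analysis using the constraints $s_{i,j}\geq 1$ and $i\geq 3$ (which forces $A$ to carry a definite number of critical points) will be needed to confirm that the strictly positive dominant term always dominates the corrections. This step, together with verifying that the slide can always be realized on the side where $i\geq 3$ guarantees room, is where the real work lies; the rest is the direct bookkeeping already developed for Lemmas \ref{lem:addingtangle}, \ref{lem:out}, and \ref{lem:s1geqs3}.
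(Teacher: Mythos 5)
Your proposal takes essentially the same route as the paper: the paper's entire proof of this proposition is the single sentence that it ``follows from a modification of the proof of Lemma \ref{lem:s1geqs3},'' and your plan --- sliding the braid boxes adjacent to the strictly minimal bundle, computing the width change via Remark \ref{rmk:iso}, eliminating maxima counts with the flux relations, and extracting strictly positive terms of the form $2(\cdot)(s_{k,l}-s_{i,j})$ from the strict minimality hypothesis --- is exactly that modification, spelled out in more detail than the paper itself provides. Your closing caveats about degenerate cases (one-sided boxes, small tangles) mirror the case analysis the paper performs in Lemma \ref{lem:s1geqs3}, so nothing in your outline conflicts with the intended argument.
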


\begin{proof} This follows from a modification of the proof of Lemma \ref{lem:s1geqs3}.\end{proof}

Unfortunately, the previous three propositions do not encapsulate all possibilities for $K^n_g$. This leads us to our next question.

\begin{qu}Let $K^n_g$ be the projection of the link depicted in Figure \ref{fig:kng}. If $n \geq 3$, can $K^n_g$ ever be in thin position?\end{qu}

If this question is answered in the negative, it would be most satisfying to also have a useful estimate on how much thinner we can make $K^n_g$.

We now further generalize the projections we consider. Let $K^n_l$ be a generalized type $n$-projection obtained from the definition of a type-$n$ projection by removing the last requirement (so we may have $s_{i,1} \neq s_{i,2}$) and replacing the first two requirements in the definition of a type-$n$ projection with the requirement that if $X_{i,1}$ is lower than and disjoint from braid box $X_{j,1}$, then braid box $X_{i,2}$ is higher than and disjoint from braid box $X_{j,2}$ and vice versa, see Figure \ref{fig:knlgen}.

\begin{figure}
\begin{center} \includegraphics[scale=.3]{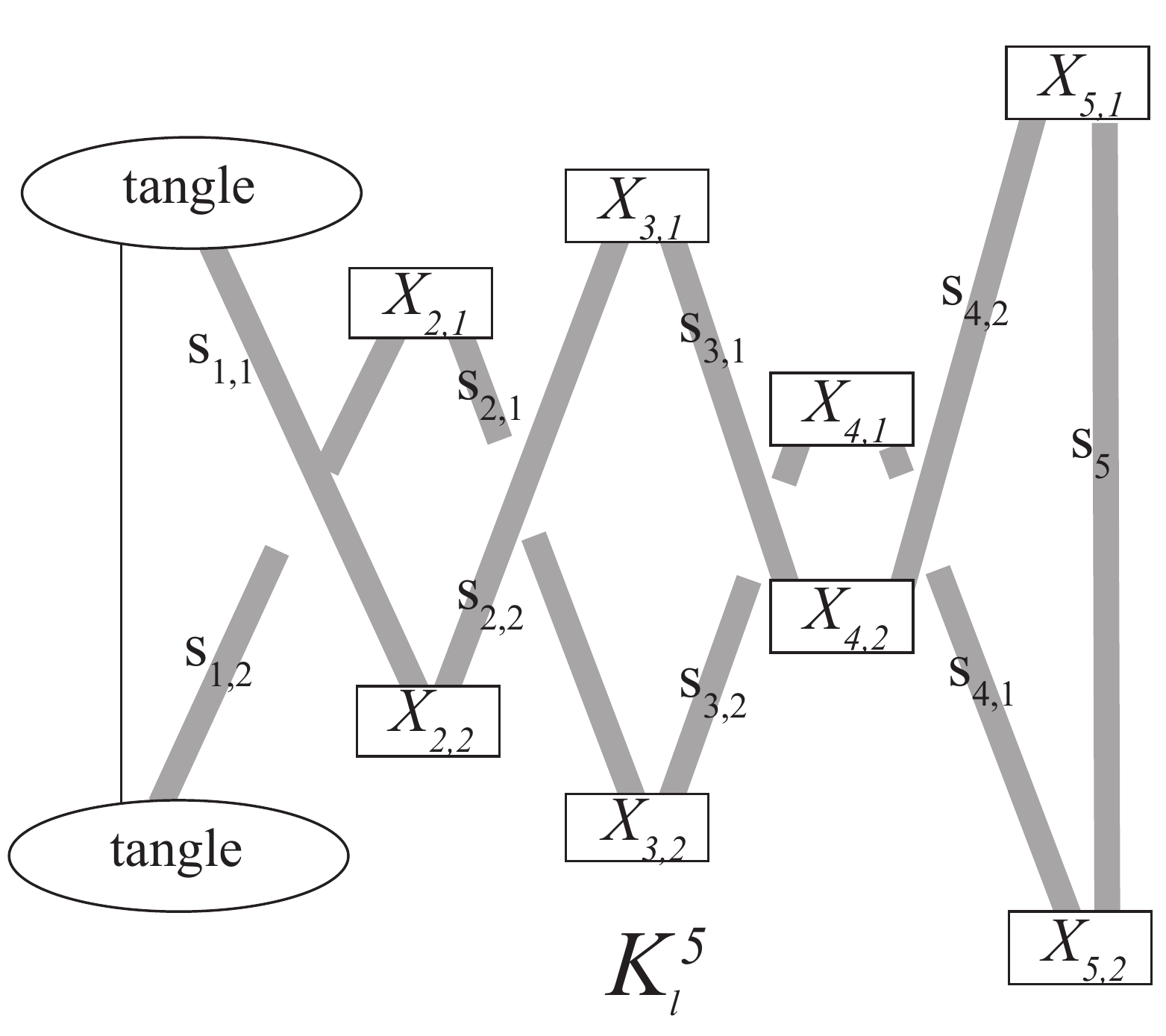}
\end{center}
\caption{} \label{fig:knlgen} \end{figure}

We can show that at least some of these projections cannot be in thin position. More precisely:
\begin{prop}
Let $K$ be the projection of the link depicted in Figure \ref{fig:knl}. In particular, assume that all tangles have heights disjoint from the heights of $X_{2,1}$ and $X_{2,2}$ and tangle $T$ has equal number $t$ of strands entering it from above and leaving it from below. Then $K$ is not in thin position. \end{prop}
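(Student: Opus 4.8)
The plan is to run the two-stage isotopy of Lemma \ref{lem:s1geqs3} in this more general setting and to show directly that it strictly lowers the width, which is all we need: if an isotopic projection $K'$ satisfies $w(K')<w(K)$, then $w(\mathcal{K})\leq w(K')<w(K)$, so $K$ is not thin. Since the hypothesis guarantees that the heights of all tangles, and in particular of $T$, are disjoint from the heights of $X_{2,1}$ and $X_{2,2}$, I would first slide $X_{2,1}$ vertically downward until it lies below $T$, and then slide $X_{2,2}$ vertically upward until it lies above $T$, passing $X_{2,1}$ on the way. This is precisely the isotopy used in Lemma \ref{lem:s1geqs3}, now applied to the generalized braiding pattern of $K^n_l$ rather than to the type-$n$ pattern.

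To compute the resulting width change I would use Remark \ref{rmk:iso}, keeping track of three types of critical-point crossings: $X_{2,1}$ against $T$, $X_{2,2}$ against $T$, and $X_{2,2}$ against $X_{2,1}$. Writing $M_i,m_i$ for the numbers of maxima and minima of $X_{2,i}$ and $M_T,m_T$ for those of $T$, these contribute $4M_T m_1-4m_T M_1$, $4M_2 m_T-4m_2 M_T$, and $4M_2 m_1-4m_2 M_1$ respectively, exactly as in that lemma. The decisive input is the balance hypothesis on $T$: because $T$ has equally many strands (namely $t$) at its top and at its bottom, and for any tangle the number of maxima minus the number of minima equals half the difference between the numbers of bottom and top endpoints, we obtain $M_T=m_T$. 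This collapses the first two contributions to $4M_T(m_1-M_1)$ and $4M_T(M_2-m_2)$, each of which is a multiple of the net strand imbalance of a single box.

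Finally I would read off the signs from the strand counts. In the $K^n_l$ pattern the upper box $X_{2,1}$ is net-descending and the lower box $X_{2,2}$ is net-ascending, so their imbalances are opposite (the analogues of $M_1-m_1=\tfrac{1}{2}(s_1+s_2)>0$ and $M_2-m_2=-\tfrac{1}{2}(s_1+s_2)<0$); substituting, the two $T$-passage terms each equal $-2M_T(s_1+s_2)$ and the box--box term equals $-2(s_1+s_2)(m_1+m_2)$, so the total change is $-2(s_1+s_2)(2M_T+m_1+m_2)<0$ whenever the boxes carry strands. I expect the main obstacle to be exactly this sign bookkeeping in the general pattern: one must confirm, from the defining alternation condition on $K^n_l$ (if $X_{i,1}$ lies below $X_{j,1}$ then $X_{i,2}$ lies above $X_{j,2}$), that the two boxes genuinely have opposite net directionality, so that all three contributions share the same sign. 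The balance condition $M_T=m_T$ is what renders the passage through $T$ harmless; without it, dragging a box through $T$ could raise the width and the computation would no longer be monotone.
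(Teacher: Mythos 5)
Your proposal is correct and is essentially the paper's own argument: the paper's proof is precisely a sketch instructing one to run the two-stage isotopy and bookkeeping of Lemma \ref{lem:s1geqs3}, with the balanced-strand hypothesis forcing $M_T=m_T$ so that passing the boxes through $T$ cannot increase width. Your final formula $-2(s_1+s_2)(2M_T+m_1+m_2)$ even recovers the paper's stated bound of $8r+4$ (since the net strand imbalance forces $s_1+s_2\geq 2$ and $m_2\geq \tfrac{1}{2}(s_1+s_2)\geq 1$).
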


\begin{proof} The proof is suggested by Figure \ref{fig:knl}. The computations are very similar to those in the proof of Lemma \ref{lem:s1geqs3} and show that the isotopy thins the projection by at least $8r+4$ where $r$ is the number of maxima in $T$. \end{proof}
\begin{figure}
\begin{center} \includegraphics[scale=.3]{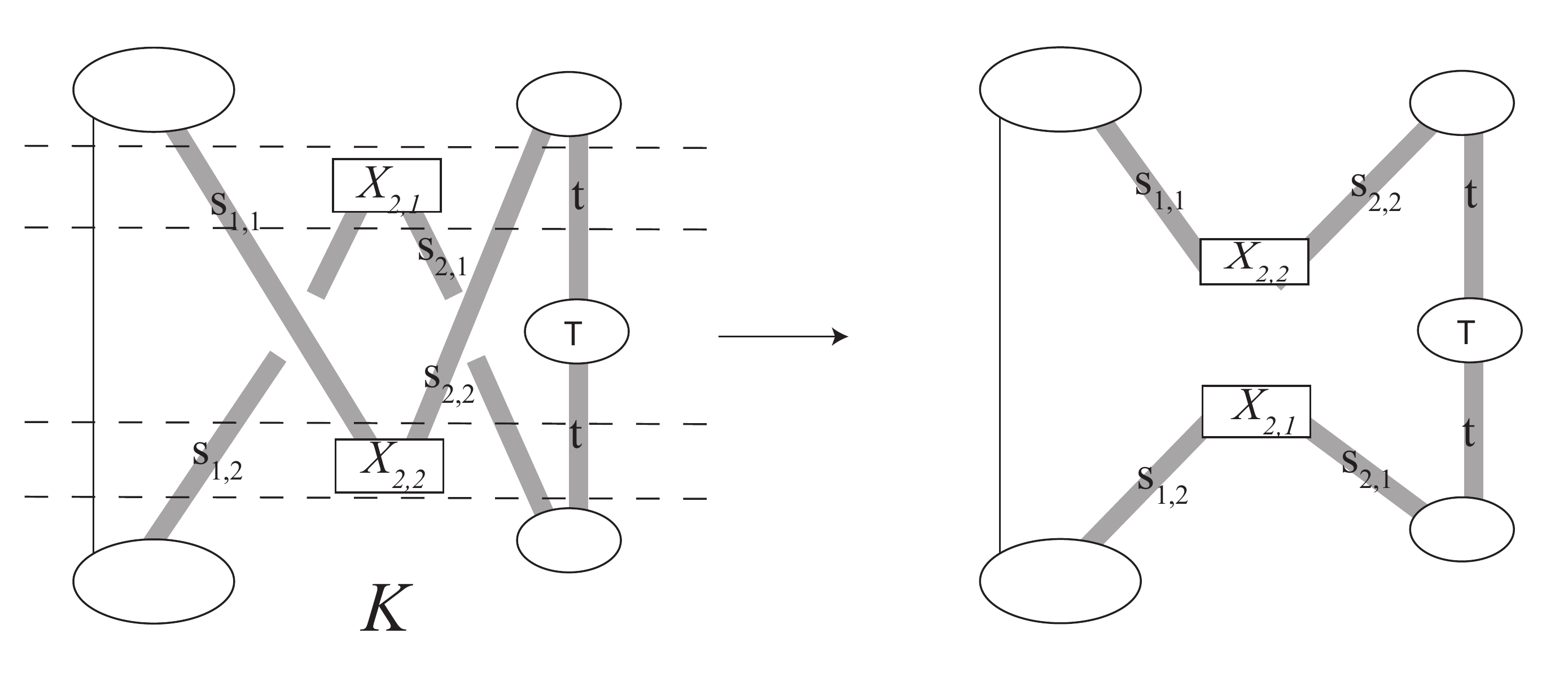}
\end{center}
\caption{} \label{fig:knl} \end{figure}

This result leads us to our last question.

\begin{qu}  Let $K^n_l$ be a projection defined above. Can $K^n_l$ be in thin position?\end{qu}

 \end{document}